\newtheorem{theorem}{Theorem}
\theoremstyle{plain}
\newtheorem{corollary}{Corollary}
\newtheorem{definition}{Definition}
\newtheorem{example}{Example}
\newtheorem{lemma}{Lemma}
\newtheorem{proposition}{Proposition}
\newtheorem{remark}{Remark}
\numberwithin{equation}{section}
\begin{document}
\title[orthogonal polynomials]{A few remarks on orthogonal polynomials}
\author{Pawe\l\ J. Szab\l owski}
\address{Department of Mathematics and Information Sciences,\\
Warsaw University of Technology\\
ul. Koszykowa 75, 00-662 Warsaw, Poland}
\email{pawel.szablowski@gmail.com}
\date{January 2013}
\subjclass[2010]{Primary 42C05, 42C10; Secondary 33D45, 60J35}
\keywords{Moment problem, moment matrix, Cholesky decomposition, Hankel
matrices, Radon--Nikodym derivative, connection coefficients, linearization
coefficients}
\thanks{The author is grateful to unknown referees whose remarks helped to
improve the paper.\\
Email: pawel.szablowski@gmail.com}

\begin{abstract}
Knowing a sequence of moments of a given, infinitely supported, distribution
we obtain quickly: coefficients of the power series expansion of monic
polynomials $\left\{ p_{n}\right\} _{n\geq 0}$ that are orthogonal with
respect to this distribution, coefficients of expansion of $x^{n}$ in the
series of $p_{j},$ $j\leq n$, two sequences of coefficients of the 3-term
recurrence of the family of $\left\{ p_{n}\right\} _{n\geq 0}$, the so
called "linearization coefficients" i.e. coefficients of expansion of $%
p_{n}p_{m}$ in the series of $p_{j},$ $j\leq m+n.$\newline
Conversely, assuming knowledge of the two sequences of coefficients of the
3-term recurrence of a given family of orthogonal polynomials $\left\{
p_{n}\right\} _{n\geq 0},$ we express with their help: coefficients of the
power series expansion of $p_{n}$, coefficients of expansion of $x^{n}$ in
the series of $p_{j},$ $j\leq n,$ moments of the distribution that makes
polynomials $\left\{ p_{n}\right\} _{n\geq 0}$ orthogonal. \newline
Further having two different families of orthogonal polynomials $\left\{
p_{n}\right\} _{n\geq 0}$ and $\left\{ q_{n}\right\} _{n\geq 0}$ and knowing
for each of them sequences of the 3-term recurrences, we give sequence of
the so called "connection coefficients" between these two families of
polynomials. That is coefficients of the expansions of $p_{n}$ in the series
of $q_{j},$ $j\leq n.$\newline
We are able to do all this due to special approach in which we treat vector
of orthogonal polynomials $\left\{ p_{j}\left( x)\right) \right\} _{j=0}^{n}$
as a linear transformation of the vector $\left\{ x^{j}\right\} _{j=0}^{n}$
by some lower triangular $(n+1)\times (n+1)$ matrix $\mathbf{\Pi }_{n}.$
\end{abstract}

\maketitle

\section{Introduction and notation}

Let us first make some remarks concerning notation. $\alpha ,$ $\beta ,$ $%
\ldots $ will denote positive measures on the real line. We will assume that
all of these measures have infinite supports. In order to be able to
sometimes use probabilistic notation we will assume that all considered
measures are normalized. Integrals of an integrable function $f$ with
respect to the measure $\alpha $ will be denoted by either of the following
denotations: 
\begin{equation*}
\int f(x)d\alpha (x),~~\int fd\alpha ,\mathbb{E}f,~~\mathbb{E}f(Z),~~\mathbb{%
E}_{\alpha }f(Z),
\end{equation*}%
depending on the context and the need to specify details. In the above
formulae, $Z$ denotes random variable with the distribution $\alpha .$
Probability theory ensures that $Z$ always exist.

Matrices and vectors (always columns) will be generally denoted by the bold
type letters. The most important vector and matrix are $\mathbf{X}%
_{n}\allowbreak =\allowbreak (1,x,\ldots ,x^{n})^{T}$ ($T-$transposition)
and 
\begin{equation}
\mathbf{M}_{n}(\alpha )\allowbreak =\allowbreak \left[ m_{i+j}(\alpha )%
\right] _{j,i=0,\ldots ,n},  \label{matrix}
\end{equation}%
where $m_{n}(\alpha )\allowbreak =\allowbreak \int x^{n}d\alpha (x).$ In
other words $\mathbf{M}_{n}(\alpha )\allowbreak =\allowbreak \mathbb{E}%
_{\alpha }\mathbf{X}_{n}\mathbf{X}_{n}^{T}.$ Matrices of this form, that is
having the same elements on counter diagonals, are called Hankel matrices.

\begin{definition}
We will say that the moment problem is determinate if there exists only one
measure $\alpha $ that generates the moment sequence $\left\{ m_{n}\left(
\alpha \right) \right\} _{n\geq 0}.$ Otherwise we say that the moment
problem is indeterminate.
\end{definition}

\begin{remark}
There exist sufficient criteria allowing to check if the moment problem is
determinate or not. For example Carleman's criterion states that if $%
\sum_{n\geq 0}m_{2n}^{1/2n}<\infty ,$ then the moment problem is
indeterminate. Else, if $\int \exp \left( \left\vert x\right\vert \right)
d\alpha \left( x\right) <\infty ,$ then the moment problem is determinate.
\end{remark}

In the sequel we will assume generally that our moment problem is
determinate.

$\left( \mathbf{A}\right) _{j,k}$ will denote $(j,k)-$th entry of the matrix 
$\mathbf{A.}$

Infinite support assumption ensures that for every $n$ one can always find $%
n+1$ linearly independent vectors of the form $(1,x_{k},\ldots
,x_{k}^{n})^{T},$where $x_{k}\in \limfunc{supp}\alpha $. Besides we know
that if $\limfunc{supp}\alpha $ is infinite then matrices $\mathbf{M}%
_{n}(\alpha )$ are non-singular for every $n.$ Let us remark immediately
that the matrix $\mathbf{M}_{n}$ is the main submatrix of the matrix $%
\mathbf{M}_{n+1}.$ We also define sequence 
\begin{equation}
\Delta _{n}(\alpha )\allowbreak =\allowbreak \det \mathbf{M}_{n}(\alpha ),
\label{det}
\end{equation}%
$n\geq 1$, of determinants of matrices $\mathbf{M}_{n}(\alpha )$ and let us
also introduce vectors consisting of successive moments 
\begin{equation*}
\mathbf{m}_{n}^{T}(\alpha )\allowbreak =\allowbreak (1,\ldots ,m_{n}(\alpha
)).
\end{equation*}%
Vector $\mathbf{m}_{n}(\alpha )$ is the first column of the matrix $\mathbf{M%
}_{n}(\alpha ).$

In order to avoid repetition of assumption we will assume that matrices $%
\mathbf{M}_{n}(\alpha )$ exist for all $n\geq 0.$ In other words we assume
that all moments of the measure $\alpha $ exist. Obviously $(0,0)$ entry of
the matrix $\mathbf{M}_{n}$ is equal to $1.$

We know that given measure $\alpha ,$ such that all moments exist, one can
define the set of polynomials $\left\{ p_{n}(x,\alpha )\right\} _{n\geq -1}$
with $p_{-1}(x,\alpha )\allowbreak =\allowbreak 0,$ $p_{0}(x,\alpha
)\allowbreak =\allowbreak 1$, such that $p_{n}$ is of degree $n$ and
satisfying for $n+m\neq -2$ the following relationship:%
\begin{equation*}
\int p_{n}\left( x,\alpha \right) p_{m}(x,\alpha )d\alpha (x)\allowbreak
=\allowbreak \delta _{n,m},
\end{equation*}%
where $\delta _{n,m}$ denotes Kronecker's delta. Moreover if we declare that
all leading coefficients of the polynomials $p_{n}(x,\alpha )$ are positive
then the coefficients $\pi _{n,i}(\alpha )$ of the expansion%
\begin{equation}
p_{n}\left( x,\alpha \right) \allowbreak =\allowbreak \sum_{i=0}^{n}\pi
_{n,i}\left( \alpha \right) x^{i},  \label{p_o}
\end{equation}%
are defined uniquely by the measure $\alpha .$ According to our convention,
later we will drop dependence on $\alpha $ if the measure $\alpha $ is
clearly specified.

Let us define vectors $\mathbf{P}_{n}(x)\allowbreak =\allowbreak
(p_{0}(x),\ldots ,p_{n}(x))^{T}$ and the lower triangular matrix $\mathbf{%
\Pi }_{n}$ with entries $\pi _{i,j}$ if $i\geq j$ and $0$ otherwise. We
obviously have:%
\begin{equation}
\mathbf{P}_{n}(x)=\mathbf{\Pi }_{n}\mathbf{X}_{n}.  \label{_p}
\end{equation}%
To continue introduction of notation, let $\lambda _{n,i}(\alpha )$ denote
coefficients in the following expansions:%
\begin{equation}
x^{n}\allowbreak =\allowbreak \sum_{i=0}^{n}\lambda _{n,i}(\alpha
)p_{i}\left( x,\alpha \right) .  \label{inv_p}
\end{equation}%
Consequently let us introduce lower triangular matrices $\mathbf{\Lambda }%
_{n}$ with entries $\lambda _{i,j}$ if $i\geq j$ and $0$ otherwise.

We obviously have:%
\begin{equation}
\mathbf{X}_{n}\allowbreak =\allowbreak \mathbf{\Lambda }_{n}\mathbf{P}%
_{n}(x),~~\mathbf{\Pi }_{n}\mathbf{\Lambda }_{n}\allowbreak =\allowbreak 
\mathbf{\Lambda }_{n}\mathbf{\Pi }_{n}=\mathbf{I}_{n},  \label{_x}
\end{equation}%
where $\mathbf{I}_{n}$ denotes $(n+1)\times (n+1)$ identity matrix.

As polynomials $\left\{ p_{n}\right\} $ are orthonormal, there exist two
number sequences $\left\{ a_{n}\right\} ,$ $\left\{ b_{n}\right\} $ such
that polynomials $\left\{ p_{n}\right\} $ satisfy the following 3-term
recurrence:%
\begin{equation}
xp_{n}(x)\allowbreak =\allowbreak
a_{n+1}p_{n+1}(x)+b_{n}p_{n}(x)+a_{n}p_{n-1}(x),  \label{3tr}
\end{equation}%
with $a_{0}\allowbreak =\allowbreak 0$ and $n\geq 0.$ We know also that 
\begin{equation}
a_{n}\allowbreak =\allowbreak \frac{\pi _{n-1,n-1}}{\pi _{n,n}}%
,~~b_{n}\allowbreak =\allowbreak \int xp_{n}^{2}\left( x\right) d\alpha
\left( x\right) ,  \label{coeff}
\end{equation}%
consequently that $b_{0}\allowbreak =\allowbreak m_{1}.$ For details see
e.g. \cite{Akhizer65} or \cite{Nev86}. \ 

Combining (\ref{3tr}) and (\ref{p_o}) we get the following set of recursive
equations to be satisfied by the coefficients $\pi _{n,j}$. 
\begin{eqnarray}
a_{n+1}\pi _{n+1,0}+b_{n}\pi _{n,0}+a_{n}\pi _{n-1,0}\allowbreak
&=&\allowbreak 0,  \label{_f} \\
a_{n+1}\pi _{n+1,j}+b_{n}\pi _{n,j}+a_{n}\pi _{n-1,j}\allowbreak
&=&\allowbreak \pi _{n,j-1},  \label{_s}
\end{eqnarray}%
for $n\geq 0,$ $j\allowbreak =\allowbreak 1,\ldots ,n$, with $\pi
_{n,j}\allowbreak =\allowbreak 0$ for $j>n.$ Further combining (\ref{3tr})
and (\ref{inv_p}) we get the following set of equations to be satisfied by
the coefficients $\lambda _{n,i}.$ 
\begin{eqnarray}
\lambda _{n+1,n+1}\allowbreak &=&\allowbreak \lambda _{n,n}a_{n+1},
\label{lam1} \\
\lambda _{n+1,0}\allowbreak &=&\allowbreak \lambda _{n,0}b_{0}+\lambda
_{n,1}a_{1},  \label{lam2} \\
\lambda _{n+1,i} &=&\lambda _{n,i-1}a_{i}+\lambda _{n,i}b_{i}+\lambda
_{n,i+1}a_{i+1},  \label{lam3}
\end{eqnarray}%
with, $\lambda _{0,0}\allowbreak =\allowbreak 1,$ so $\lambda
_{n,n}\allowbreak =\allowbreak \prod_{j=1}^{n}a_{j},$ $n\geq 1.$

\begin{remark}
As it follows from formula (2.1.6) of \cite{IA} coefficients $\pi _{n,i}$
can be expressed as determinants of certain submatrices built of moment
matrix $\mathbf{M}_{n}.$ In particular denoting by $D_{n}^{(i,j)}$ the
determinant of a submatrix obtained by removing row number $i+1$ and column
number $j+1$ of the matrix $\mathbf{M}_{n}.$ We have $\pi _{n,i}\allowbreak
=\allowbreak (-1)^{n-i}D_{n}^{(i,n)}/\sqrt{\Delta _{n}\Delta _{n-1}}$ the so
called Heine representation of orthogonal polynomials (see formula (2.2.6)
in \cite{IA}).
\end{remark}

It should be stressed that the presented above approach of treating first $n$
elements of the sequence $\left\{ p_{j}\left( x\right) \right\} _{j\geq 0}$
as a $(n+1)-$vector being the result of multiplication of matrix $\mathbf{%
\Pi }_{n}$ by vector $\mathbf{X}_{n}$ is very fruitful although not
original. Traces of it appear in \cite{Bre80} or \cite{Chih79} and can be
found even earlier. Its relation to Choleski decomposition of the moment
matrix and its inverse are also not original. However such view appears in
the literature as 'yet another possibility' of looking on the main result.
In this paper this approach is a basic tool to get known results and new
ones mostly concerning connection and linearization formulae.

Most of our results concern the so called "truncated moment problem" that is
we, in fact, assume that we know finite number (say $2n+1$ including moment
or order $0)$ of moments of some distribution. That is, in many cases, the
assumption that the matrix $\mathbf{M}_{n}$ exists for all $n$ will not be
needed. Then, without using Gram--Schmidt procedure, we derive $n+1$
polynomials $\{p_{i}\}_{i=0}^{n}$ that are mutually orthogonal, we find
coefficients of expansion of $x^{i}$ in terms of these polynomials as well
as we derive all of the so called linearization coefficients i.e.
coefficients of the expansions $p_{i}(x)p_{j}(x)$ in polynomials $%
\{p_{i}\}_{i=0}^{i+j}$ for all $i,j\geq 0.$

Given two distributions (say $\alpha $ and $\delta )$ and two respective
moment sequences we are able to derive the so called "connection
coefficients" i.e. coefficients of the expansion of say $p_{j}\left(
x,\delta \right) $ in $\{p_{i}(x,\alpha )\}_{i=0}^{j}$ and conversely.

Due to very efficient numerical algorithms of Cholesky decomposition and
inversion of lower triangular matrices all these calculations can be done
within seconds using today's computers.

Of course we present also results that require existence of all moments.
These are some limit properties of arithmetic averages of orthogonal
polynomials and more importantly results concerning expansions of
Radon--Nikodym derivatives of one distribution with respect to the other
(see (\ref{rozkl})).

The paper is organized as follows. In the next Section \ref{chol} we present
consequences of our approach and derive some known and unknown results of
lesser importance. We do this basically to illustrate the usefulness of our
approach. By the end of this Section in Subsection \ref{recurr} we relate
coefficients of the power series expansion of polynomials $\left\{
p_{n}\right\} _{n\geq 0}$ to the coefficients of the 3-term recurrence
satisfied by these polynomials. More precisely we partially solve systems of
equations (\ref{_f}), (\ref{_s}) and (\ref{lam1}), (\ref{lam2}), (\ref{lam3}%
). The solution is exact and complete (see (\ref{sol11}), (\ref{sol21}) and (%
\ref{sol2})) in the case of symmetric measures $\alpha $.

We think that particularly interesting and new are results concerning
connection coefficients presented in Section \ref{con-coef}. We present
there not only formula for the connection coefficients between the two sets
of orthogonal polynomials related to two different measures but also
expansion of the Radon--Nikodym derivative of one measure with respect to
the other in a Fourier series of orthogonal polynomials related to one of
the measures. We give there two nontrivial examples. Interesting and new
seems also Section \ref{Line} presenting general formula for the
linearization coefficients. Longer and uninteresting proofs are shifted to
Section \ref{dow}.

\section{Cholesky decomposition and its consequences\label{chol}}

Our basic tool in what follows is the so called Cholesky decomposition of
the symmetric, positive definite matrix. Below we collect some of the
properties of the Cholesky decomposition in the following simple proposition.

\begin{proposition}
\label{Cholesky}Suppose a positive, normalized measure $\alpha $ has support
of infinite cardinality and $\int x^{2N}d\alpha <\infty $ for some $N\geq 0$%
. Then

i) there exists unique real, non-singular lower triangular matrix $\mathbf{L}%
_{N}(\alpha )$ such that $\mathbf{M}_{N}(\alpha )\allowbreak =\allowbreak 
\mathbf{L}_{N}(\alpha )\mathbf{L}_{N}^{T}(\alpha ),$

ii) entries of matrix $\mathbf{L}_{N}$ can be calculated recursively 
\begin{equation}
l_{n,n}=\sqrt{m_{2n}-\sum_{j=0}^{n-1}l_{n,j}^{2}},~~l_{n+1,k}=(m_{n+k+1}-%
\sum_{j=0}^{k-1}l_{n+1,j}l_{k,j})/l_{k,k},  \label{gen}
\end{equation}%
with $l_{0,0}\allowbreak =\allowbreak 1$ for $n\allowbreak =\allowbreak
0,\ldots ,N.$ Entries $l_{n,n}$ have the following interpretation:%
\begin{equation}
l_{n,n}^{2}\allowbreak =\allowbreak \frac{\Delta _{n}}{\Delta _{n-1}},
\label{l2n}
\end{equation}%
where the sequence $\left\{ \Delta _{n}\right\} $ is defined by (\ref{det}).
In particular we have: 
\begin{eqnarray}
l_{1,1}\allowbreak &=&\allowbreak \sqrt{m_{2}-m_{1}^{2}},~~l_{2,2}=\sqrt{%
m_{4}\allowbreak -\allowbreak m_{2}^{2}-\frac{(m_{3}-m_{1}m_{2})^{2}}{%
(m_{2}-m_{1}^{2})}},  \label{_odch} \\
l_{i,0}\allowbreak &=&\allowbreak m_{i},~~l_{i,1}\allowbreak =\allowbreak 
\frac{(m_{i+1}-m_{i}m_{1})}{l_{1,1}},  \label{mom} \\
l_{i,2}\allowbreak &=&\allowbreak \frac{1}{l_{2,2}}(m_{i+2}-m_{i}m_{2}%
\allowbreak -\allowbreak \frac{(m_{3}-m_{2}m_{1})(m_{i+1}-m_{i}m_{1})}{%
(m_{2}-m_{1}^{2})}),  \label{li2}
\end{eqnarray}%
$i\allowbreak =\allowbreak 1,\ldots ,N,\allowbreak $

iii) $\forall ~0\leq i,j\leq N$%
\begin{equation*}
m_{i+j}\allowbreak =\allowbreak \sum_{k=0}^{\min (i,j)}l_{i,k}l_{j,k}.
\end{equation*}
\end{proposition}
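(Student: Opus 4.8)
The plan is to reduce everything to the single fact that $\mathbf{M}_{N}(\alpha )$ is symmetric and positive definite, after which part (i) is the classical Cholesky factorization theorem and parts (ii)--(iii) follow by comparing matrix entries. First I would record that $\mathbf{M}_{N}$ is symmetric, being a Hankel matrix, and that for any $\mathbf{v}=(v_{0},\dots ,v_{N})^{T}\in \mathbb{R}^{N+1}$ the identity $\mathbf{M}_{N}=E_{\alpha }\mathbf{X}_{N}\mathbf{X}_{N}^{T}$ gives
\[
\mathbf{v}^{T}\mathbf{M}_{N}(\alpha )\mathbf{v}=E_{\alpha }(\mathbf{v}^{T}\mathbf{X}_{N})^{2}=\int \Bigl(\sum_{i=0}^{N}v_{i}x^{i}\Bigr)^{2}d\alpha (x)\geq 0.
\]
Equality would force the polynomial $\sum_{i}v_{i}x^{i}$ of degree at most $N$ to vanish $\alpha $-almost everywhere; since $\operatorname{supp}\alpha $ is infinite it contains more than $N$ points, so such a polynomial is identically zero, i.e. $\mathbf{v}=0$. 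Hence $\mathbf{M}_{N}$ is positive definite, and the standard existence-and-uniqueness theorem for the Cholesky decomposition of a symmetric positive definite matrix (a unique lower triangular factor once its diagonal is required to be positive) yields part (i).

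For part (iii) I would simply read off the $(i,j)$ entry of the identity $\mathbf{M}_{N}=\mathbf{L}_{N}\mathbf{L}_{N}^{T}$: since $(\mathbf{L}_{N}^{T})_{k,j}=l_{j,k}$,
\[
m_{i+j}=(\mathbf{M}_{N})_{i,j}=\sum_{k=0}^{N}l_{i,k}l_{j,k},
\]
and because $\mathbf{L}_{N}$ is lower triangular the summands vanish once $k>i$ or $k>j$, truncating the sum at $k=\min (i,j)$. The recursions in (ii) arise from the same entry comparison, now solved in the natural forward order: the equation for the $(n,n)$ entry, $m_{2n}=\sum_{j=0}^{n}l_{n,j}^{2}$, is solved for the diagonal term $l_{n,n}$, while the equation for the $(n+1,k)$ entry with $k\leq n$, namely $m_{n+k+1}=\sum_{j=0}^{k}l_{n+1,j}l_{k,j}$, is solved for $l_{n+1,k}$ using $l_{k,k}\neq 0$; this reproduces (\ref{gen}). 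The explicit formulas (\ref{_odch})--(\ref{li2}) then follow by unwinding these recursions for the first few indices.

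The one point deserving care --- and the main, if modest, obstacle --- is the determinant interpretation $l_{n,n}^{2}=\Delta _{n}/\Delta _{n-1}$. The key observation is that the leading $(n+1)\times (n+1)$ principal block of $\mathbf{L}_{N}$, call it $\mathbf{L}_{n}$, satisfies $\mathbf{M}_{n}=\mathbf{L}_{n}\mathbf{L}_{n}^{T}$: writing $\mathbf{L}_{N}$ in block lower-triangular form and multiplying out $\mathbf{L}_{N}\mathbf{L}_{N}^{T}$, the leading block of the product is exactly $\mathbf{L}_{n}\mathbf{L}_{n}^{T}$, which must equal the leading block $\mathbf{M}_{n}$ of $\mathbf{M}_{N}$. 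Taking determinants and using that $\mathbf{L}_{n}$ is triangular with diagonal $l_{0,0},\dots ,l_{n,n}$ gives $\Delta _{n}=(\det \mathbf{L}_{n})^{2}=\prod_{k=0}^{n}l_{k,k}^{2}$, whence $l_{n,n}^{2}=\Delta _{n}/\Delta _{n-1}$ with the base case $\Delta _{0}=m_{0}=1$. Positivity of each $\Delta _{n}$, which is what makes the square roots in (\ref{gen}) and (\ref{_odch}) real, is guaranteed by the positive definiteness established in the first step.
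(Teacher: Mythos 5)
Your proposal is correct and follows essentially the same route as the paper: positive definiteness of $\mathbf{M}_{N}$ plus the classical existence--uniqueness theorem for the Cholesky factorization gives (i), entrywise comparison of $\mathbf{M}_{N}=\mathbf{L}_{N}\mathbf{L}_{N}^{T}$ solved in forward order gives (ii) and (iii) (this is precisely the Cholesky--Banachiewicz algorithm the paper cites from \cite{Fox64}), and taking determinants of the leading principal blocks gives (\ref{l2n}) exactly as the paper's appeal to the Cauchy theorem does. The only difference is one of detail rather than method: you work out explicitly what the paper delegates to references --- the quadratic-form argument for positive definiteness via the infinite support, the positive-diagonal normalization underlying uniqueness, and the nested-block compatibility $\mathbf{M}_{n}=\mathbf{L}_{n}\mathbf{L}_{n}^{T}$ --- all of which is sound.
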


\begin{proof}
i) Follows the existence and uniqueness of the Cholesky decomposition (see
e.g. Theorem 8.2.1 of \cite{Serre}) and the fact that if the support of a
positive measure is infinite then matrix $\mathbf{M}_{N}$ exists, is
symmetric and positive definite. Besides by the Cauchy Theorem we have $%
\Delta _{n}\allowbreak =\allowbreak \left( \det L_{n}\right) ^{2}\allowbreak
=\allowbreak \prod_{i=0}^{n}l_{i,i}^{2}.$ Since $\Delta _{n-1}\allowbreak
=\allowbreak \prod_{j=0}^{n-1}l_{i,i}^{2}$ we get our assertion. ii) Follows
one of the algorithms of obtaining Cholesky decomposition (so called
Cholesky--Banachiewicz algorithm that can be found in. e.g. \cite{Fox64}).
\end{proof}

Some, more or less obvious, consequences and observations we collect in the
next proposition:

\begin{proposition}
\label{interpretacja}Let $\alpha $ be a certain measure. Let $\mathbf{M}%
_{n}(\alpha )$ and $\mathbf{M}_{n}^{-1}(\alpha ),$ be respectively $n-$th
moment matrix and its inverse. Denote $\mathbf{M}_{n}^{-1}\allowbreak
=\allowbreak \lbrack \mu _{i,j}^{(n)}]_{0\leq i,j\leq n}$. Let $\mathbf{L}%
_{n}\mathbf{L}_{n}^{T}\allowbreak =\allowbreak \mathbf{M}_{n}$ be the
Cholesky decomposition of the matrix $\mathbf{M}_{n}$, then

i) $\forall n\geq 0,$ $\mathbf{\Pi }_{n}=\mathbf{L}_{n}^{-1},$ $\mathbf{%
\Lambda }_{n}=\mathbf{L}_{n}.$ That is $\mathbf{\Lambda }_{n}\mathbf{\Lambda 
}_{n}^{T}\allowbreak =\allowbreak \mathbf{M}_{n}$ and $\mathbf{\Pi }_{n}^{T}%
\mathbf{\Pi }_{n}\allowbreak =\allowbreak \mathbf{M}_{n}^{-1}$ in particular 
$\sum_{k=\max (i,j)}^{n}\pi _{k,i}\pi _{k,j}\allowbreak =\allowbreak \mu
_{i,j}^{(n)}$ and $\sum_{k=0}^{\min (i,j)}\lambda _{i,k}\lambda
_{j,k}\allowbreak =\allowbreak m_{i+j}.$

ii) $\mathbf{P}_{n}^{T}(x)\mathbf{P}_{n}(y)\allowbreak =\allowbreak
\sum_{i=0}^{n}p_{i}(x)p_{i}(y)\allowbreak =\allowbreak \mathbf{X}_{n}^{T}%
\mathbf{M}_{n}^{-1}\mathbf{Y}_{n}$, thus $\mathbf{X}_{n}^{T}\mathbf{M}%
_{n}^{-1}\mathbf{Y}_{n}$ is the reproducing kernel and $1/\mathbf{X}_{n}^{T}%
\mathbf{M}_{n}^{-1}\mathbf{X}_{n}$ is the Christoffel function of the
measure $\alpha .$ Consequently 
\begin{eqnarray*}
\left\vert \mathbf{X}_{n}^{T}\mathbf{M}_{n}^{-1}\mathbf{Y}_{n}\right\vert
&\leq &\frac{1}{\xi _{0,n}}\sqrt{(1+\ldots +x^{2n})(1+\ldots +y^{2n})}, \\
\frac{\xi _{0,n}}{1+\ldots +x^{2n}}\, &\leq &\frac{1}{\mathbf{X}%
_{n}^{T}M_{n}^{-1}\mathbf{X}_{n}}\leq \frac{\xi _{n,n}}{1+\ldots +x^{2n}},
\end{eqnarray*}%
where $\xi _{0,n}\leq \xi _{1,n}\leq \ldots \leq \xi _{j,n}\leq \ldots \leq
\xi _{n,n}$ denote eigenvalues of the matrix $\mathbf{M}_{n}$ in
non-decreasing order.$.$

iii) $\int \mathbf{P}_{n}^{T}(x)\mathbf{M}_{n}\mathbf{P}_{n}(x)\allowbreak
d\alpha (x)=\allowbreak \sum_{i=0}^{n}m_{2i}\allowbreak =\allowbreak \xi
_{0,n}+\ldots +\xi _{n,n}.$

iv) $\frac{1}{2\pi }\int_{0}^{2\pi }\mathbf{P}_{n}(e^{it})\mathbf{P}%
_{n}^{T}(e^{-it})dt\allowbreak =\allowbreak \mathbf{\Pi }_{n}\mathbf{\Pi }%
_{n}^{T}$, consequently $\frac{1}{2\pi }\sum_{j=0}^{n}\int_{0}^{2\pi
}\left\vert p_{j}(e^{it})\right\vert ^{2}dt\allowbreak =\allowbreak \func{tr}%
(\mathbf{M}_{n}^{-1})\allowbreak =\allowbreak \sum_{j=0}^{n}1/\xi _{j,n},$

v) $\frac{1}{\xi _{n,n}}\leq \sum_{j\geq 0}^{n}\left\vert
p_{j}(0)\right\vert ^{2}\allowbreak =\allowbreak \mu _{0,0}^{(n)}\allowbreak
\leq \allowbreak \frac{1}{\xi _{0,n}},$

vi) $\frac{1}{\sqrt{n+1}\log ^{2}(n+2)}\sum_{i=0}^{n}p_{i}(x,\alpha
)\longrightarrow 0,$ $\alpha -$a.s. as $n\longrightarrow \infty .$
\end{proposition}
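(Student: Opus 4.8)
The plan is to read statement (vii) as an almost-everywhere growth estimate for the partial sums of an orthonormal system, and to prove it by a maximal inequality followed by a dyadic Borel--Cantelli argument. First I would pass to the probabilistic picture used throughout the paper: let $Z$ be a random variable with distribution $\alpha$ and put $Y_i=p_i(Z,\alpha)$. The orthonormality relation $\int p_ip_j\,d\alpha=\delta_{i,j}$ says exactly that the $Y_i$ are orthogonal in $L^2(\alpha)$ with $E_\alpha Y_i^2=1$ (and $Y_0\equiv 1$). Writing $S_n=\sum_{i=0}^n Y_i=\sum_{i=0}^n p_i(Z)$, orthogonality gives at once the identity $E_\alpha S_n^2=\sum_{i=0}^n E_\alpha Y_i^2=n+1$. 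This controls a single $S_n$, but the claim concerns the whole sequence $S_n/(\sqrt{n+1}\log^2(n+2))$, so I need uniform control of the fluctuations rather than a single second moment.

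The essential tool is the Rademacher--Menshov maximal inequality: for orthogonal $Y_0,\ldots,Y_N$ one has $E_\alpha\max_{0\le k\le N}S_k^2\le C(\log_2 2N)^2\sum_{i=0}^N E_\alpha Y_i^2$. Applying this with $N=2^{m+1}$ and $E_\alpha Y_i^2=1$ yields $E_\alpha\max_{0\le k\le 2^{m+1}}S_k^2\le C'(m+1)^2\,2^{m+1}$. Chebyshev's inequality then gives
\[
P\!\left(\max_{0\le k\le 2^{m+1}}|S_k|>\varepsilon\sqrt{2^{m+1}}\,(m+1)^2\right)\le\frac{C'}{\varepsilon^2(m+1)^2},
\]
which is summable in $m$. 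Hence by Borel--Cantelli, almost surely $\max_{0\le k\le 2^{m+1}}|S_k|\le\varepsilon\sqrt{2^{m+1}}\,(m+1)^2$ for all large $m$.

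Finally I would interpolate between the dyadic levels. For $n\in[2^m,2^{m+1})$ one has $|S_n|\le\max_{0\le k\le 2^{m+1}}|S_k|$, while $\sqrt{n+1}\log^2(n+2)\ge\sqrt{2^m}\,(m\log 2)^2$, so on the almost-sure event above
\[
\frac{|S_n|}{\sqrt{n+1}\log^2(n+2)}\le\varepsilon\sqrt{2}\,\frac{(m+1)^2}{m^2(\log 2)^2},
\]
and the right-hand side tends to $\sqrt{2}\,\varepsilon/(\log 2)^2$ as $m\to\infty$. Letting $\varepsilon$ run through a sequence tending to $0$ and intersecting the corresponding full-measure events shows that the $\limsup$ of the left-hand side is $0$ almost surely, which is the assertion.

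I expect the only nontrivial ingredient to be the Rademacher--Menshov maximal inequality; everything else is bookkeeping. It is worth noting that the two logarithmic powers in the normalization are accounted for exactly: one power of $\log n$ comes from the maximal inequality (which alone only gives $\max_{k}|S_k|\lesssim\sqrt{n}\log n$), and the second power is precisely what is needed to make the tail probabilities summable along the dyadic subsequence so that Borel--Cantelli applies.
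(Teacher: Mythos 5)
Your proposal addresses only assertion (vii); the statement you were asked to prove is the full Proposition \ref{interpretacja}, and parts i)--vi) are left completely untouched. Those parts are where the paper's matrix machinery lives: the identification $\mathbf{\Pi}_{n}=\mathbf{L}_{n}^{-1}$, $\mathbf{\Lambda}_{n}=\mathbf{L}_{n}$ follows from uniqueness of the Cholesky factorization together with uniqueness of orthonormal polynomials with positive leading coefficients; ii) follows by writing $\mathbf{P}_{n}^{T}(x)\mathbf{P}_{n}(y)=\mathbf{X}_{n}^{T}(\mathbf{L}_{n}^{-1})^{T}\mathbf{L}_{n}^{-1}\mathbf{Y}_{n}=\mathbf{X}_{n}^{T}\mathbf{M}_{n}^{-1}\mathbf{Y}_{n}$ and then using the Rayleigh-quotient bounds $\left\vert \mathbf{v}\right\vert ^{2}/\xi _{n,n}\leq \mathbf{v}^{T}\mathbf{M}_{n}^{-1}\mathbf{v}\leq \left\vert \mathbf{v}\right\vert ^{2}/\xi _{0,n}$; iii) and iv) are trace computations ($\func{tr}(\mathbf{M}_{n}\mathbf{M}_{n}^{-1})=\func{tr}\mathbf{M}_{n}$, and $\frac{1}{2\pi }\int_{0}^{2\pi }\mathbf{e}_{n}(t)\mathbf{e}_{n}^{T}(-t)dt=\mathbf{I}_{n}$); v) is ii) at $x=y=0$; and vi) needs the identity $q_{n}(0)=\sum_{j=1}^{n}\pi _{n,j}m_{j-1}$ coming from (\ref{ass_pol}). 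Each is short, but none is supplied, so as a proof of the stated proposition the proposal has a genuine gap of coverage.

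For the part you did prove, your argument is correct and takes a genuinely different route from the paper. The paper applies the series-convergence form of the Rademacher--Menshov theorem: since $\sum_{n\geq 0}\frac{\log ^{2}(n+2)}{(n+2)\log ^{4}(n+2)}<\infty $, the series $\sum_{n\geq 0}p_{n}(x)/(\sqrt{n+1}\log ^{2}(n+2))$ converges $\alpha $-a.s., and it then invokes a Kronecker-lemma-type result (Thm.\ 5 of \cite{Szab87}) to pass from convergence of the weighted series to $\frac{1}{\sqrt{n+1}\log ^{2}(n+2)}\sum_{i=0}^{n}p_{i}(x)\rightarrow 0$. You instead work directly with the Rademacher--Menshov \emph{maximal inequality}, Chebyshev, Borel--Cantelli along dyadic blocks, and interpolation inside a block; this is self-contained (no external Kronecker-type theorem is needed) at the price of redoing the chaining bookkeeping that the citations hide, and every step checks out, including the summability of $C^{\prime }/(\varepsilon ^{2}(m+1)^{2})$ and the intersection over a countable sequence $\varepsilon \rightarrow 0$. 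One small inaccuracy in your closing remark: the two logarithmic powers are \emph{not} accounted for exactly by your method. With normalization $\sqrt{n+1}\,g(\log n)$ your argument needs only $\sum_{m}(m+1)^{2}/g(m)^{2}<\infty $, so $\log ^{3/2+\epsilon }(n+2)$ would already suffice; the exponent $2$ is convenient (and matches the hypothesis of the series form of Rademacher--Menshov used by the paper), not sharp for your argument. This does not affect the validity of the proof of (vii).
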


\begin{proof}
Is shifted to Section \ref{dow}.
\end{proof}

\begin{remark}
Part of assertion i) namely the statement $\mathbf{\Pi }_{n}^{T}\mathbf{\Pi }%
_{n}\allowbreak =\allowbreak \mathbf{M}_{n}^{-1}$ and assertion iv) were
shown in \cite{Berg11}. We presented these statements for the completeness
of the paper.
\end{remark}

\begin{remark}
Assertion vii) of Proposition \ref{interpretacja} gives in fact an estimate
of the speed of convergence in Law of Large Numbers that sequence of
orthogonal polynomials satisfies. Namely this assertion can be written in
the form $\frac{\sqrt{n+1}}{\log ^{2}(n+2)}\frac{1}{n+1}%
\sum_{i=0}^{n}p_{i}(x,\alpha )\longrightarrow 0,\alpha -a.s.$as $%
n\longrightarrow \infty .$ This result is in the spirit of \cite{Morg55} and
his followers.
\end{remark}

As a corollary we have the following observations:

\begin{corollary}
Coefficients $a_{n}$ and $b_{n}:$ $n\geq 0$ defining the 3-term recurrence
are related to the moment matrix by the formulae:%
\begin{equation}
a_{n}^{2}\allowbreak =\allowbreak \frac{\Delta _{n}\Delta _{n-2}}{\Delta
_{n-1}^{2}},~\text{~}b_{n}\allowbreak =\allowbreak \frac{\Delta _{n-1}}{%
\Delta _{n}}l_{n+1,n}l_{n,n}-\frac{\Delta _{n-2}}{\Delta _{n-1}}%
l_{n,n-1}l_{n-1,n-1},  \label{wsp}
\end{equation}%
for $n\geq 2$ with $a_{0}=0,$ $a_{1}^{2}\allowbreak =\allowbreak \Delta
_{2}\allowbreak =\allowbreak m_{2}-m_{1}^{2}.$
\end{corollary}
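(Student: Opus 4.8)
The plan is to translate both statements into facts about the Cholesky factor $\mathbf{L}_n$, whose diagonal squares are already computed in (\ref{l2n}), using the identifications $\mathbf{\Pi }_n=\mathbf{L}_n^{-1}$ and $\mathbf{\Lambda }_n=\mathbf{L}_n$ from Proposition \ref{interpretacja}(i). The one structural observation I would isolate at the outset is that, since $\mathbf{\Pi }_n$ and $\mathbf{L}_n$ are mutually inverse \emph{lower triangular} matrices, their diagonal entries are reciprocal; in particular $\pi _{n,n}=1/l_{n,n}$. After that the proof is pure substitution.

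For the first identity I would insert $\pi _{n,n}=1/l_{n,n}$ into the formula $a_n=\pi _{n-1,n-1}/\pi _{n,n}$ from (\ref{coeff}), obtaining $a_n=l_{n,n}/l_{n-1,n-1}$, and then square and apply (\ref{l2n}):
\[
a_n^2=\frac{l_{n,n}^2}{l_{n-1,n-1}^2}=\frac{\Delta _n/\Delta _{n-1}}{\Delta _{n-1}/\Delta _{n-2}}=\frac{\Delta _n\Delta _{n-2}}{\Delta _{n-1}^2}.
\]

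For $b_n$ I would use that the expansion coefficients $\lambda _{k,i}$ in (\ref{inv_p}) coincide with the Cholesky entries $l_{k,i}$, and specialise the recursion (\ref{lam3}) to the index $i=n$. Since $\mathbf{L}_n$ is lower triangular the out-of-range term $\lambda _{n,n+1}a_{n+1}$ drops out, leaving $l_{n+1,n}=l_{n,n-1}a_n+l_{n,n}b_n$. Solving for $b_n$ and substituting $a_n=l_{n,n}/l_{n-1,n-1}$ gives
\[
b_n=\frac{l_{n+1,n}}{l_{n,n}}-\frac{l_{n,n-1}}{l_{n-1,n-1}},
\]
whereupon rewriting $1/l_{n,n}^2=\Delta _{n-1}/\Delta _n$ and $1/l_{n-1,n-1}^2=\Delta _{n-2}/\Delta _{n-1}$ via (\ref{l2n}) reproduces exactly the asserted expression.

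There is no genuinely hard step: once the dictionary between $\{a_n,b_n\}$ and the band entries $l_{n,n},l_{n+1,n},l_{n,n-1}$ is in place, the corollary is bookkeeping. The two points that need care are the index alignment in (\ref{lam3}) --- making sure the term indexed $n+1$ is correctly discarded because $\mathbf{L}_n$ has no entries above the diagonal --- and the boundary data. The displayed formulae are stated for $n\geq 2$; the initial values are handled separately, with $a_0=0$ being the convention already built into (\ref{3tr}) and $a_1^2$ following from $a_1=\pi _{0,0}/\pi _{1,1}=l_{1,1}$, so that $a_1^2=l_{1,1}^2=\Delta _1/\Delta _0=m_2-m_1^2$ (reading $\Delta _0=1$), which is the natural $n=1$ instance of the general pattern.
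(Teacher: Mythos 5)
Your proposal is correct, and for half of the statement it coincides with the paper's own argument: the derivation $a_n^2=\pi_{n-1,n-1}^2/\pi_{n,n}^2=l_{n,n}^2/l_{n-1,n-1}^2$ followed by (\ref{l2n}) is exactly what the paper does. For $b_n$ you take the dual route. The paper works on the inverse side: it specializes the $\pi$-recursion (\ref{_s}) to $j=n$, divides by $\pi_{n,n}$ to get $\pi_{n+1,n}/\pi_{n+1,n+1}+b_n=\pi_{n,n-1}/\pi_{n,n}$, and then needs the auxiliary fact that the $(i,i-1)$ entry of $\mathbf{L}_n^{-1}$ equals $-l_{i,i-1}/(l_{i,i}l_{i-1,i-1})$. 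You instead stay on the $\mathbf{\Lambda}_n=\mathbf{L}_n$ side and specialize the $\lambda$-recursion (\ref{lam3}) to $i=n$, where the out-of-range term indeed vanishes by triangularity, obtaining the single linear equation $l_{n+1,n}=a_nl_{n,n-1}+b_nl_{n,n}$; this sidesteps any computation of entries of the inverse matrix and is arguably the more economical implementation, though the two are transparent duals of one another (the paper's inverse-entry formula is equivalent to your linear relation via $\mathbf{\Pi}_n\mathbf{\Lambda}_n=\mathbf{I}_n$). Both arguments meet at the same intermediate identity $b_n=l_{n+1,n}/l_{n,n}-l_{n,n-1}/l_{n-1,n-1}$ and finish identically with (\ref{l2n}). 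One further point in your favor: your boundary computation $a_1^2=l_{1,1}^2=\Delta_1/\Delta_0=m_2-m_1^2$ (with $\Delta_0=1$) quietly corrects what appears to be a misprint in the statement, which writes $\Delta_2$ where $\Delta_1$ is meant.
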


\begin{proof}
Following (\ref{coeff}) and Proposition \ref{interpretacja} i) we deduce $%
a_{n}^{2}\allowbreak =\allowbreak \pi _{n-1,n-1}^{2}/\pi _{n,n}^{2}.$ Since $%
\pi _{n,n}\allowbreak =\allowbreak l_{n,n}^{-1}$ we apply (\ref{l2n}). To
get formula for $b_{n}$ first we observe that $(i,i-1)$ entry of the the
inverse of the lower triangular matrix $\mathbf{L}_{n}\allowbreak
=\allowbreak \lbrack l_{i,j}]_{i=0,\ldots ,n,j=0,\ldots ,i}$ is equal to $-%
\frac{l_{i,i-1}}{l_{i,i}l_{i-1,i-1}}.$ Besides dividing both sides of (\ref%
{_s}) with $j\allowbreak =\allowbreak n$ by $\pi _{n,n}$ we get:%
\begin{equation*}
\frac{\pi _{n+1,n}}{\pi _{n+1,n+1}}+b_{n}=\frac{\pi _{n,n-1}}{\pi _{n,n}}.
\end{equation*}%
Now we have $\frac{\pi _{n+1,n}}{\pi _{n+1,n+1}}\allowbreak =\allowbreak -%
\frac{l_{n+1,n}l_{n+1,n+1}}{_{l_{n+1,n+1}l_{n,n}}}\allowbreak =\allowbreak -%
\frac{l_{n+1,n}}{l_{n,n}}$ and similarly $\frac{\pi _{n,n-1}}{\pi _{n,n}}%
\allowbreak =\allowbreak -\frac{l_{n,n-1}}{l_{n-1,n-1}}.$ Finally we use (%
\ref{l2n}).
\end{proof}

\subsection{Coefficients of the 3-term recurrence\label{recurr}}

In this subsection we will formulate a sequence of observations concerning
the two systems of equations (\ref{_f})-(\ref{lam3}) which relate
coefficients of the 3-term recurrence to the coefficients $\pi _{n,i}$ and $%
\lambda _{n,j}.$

\begin{proposition}
\label{3t-r}i) $\forall n\geq 0:a_{n}>0.$

ii) Let us denote $\eta _{n,i}\allowbreak =\allowbreak \pi
_{n,i}\prod_{j=1}^{n}a_{j}$, $\tau _{n,i}\allowbreak =\allowbreak \lambda
_{n,i}/\prod_{j=1}^{i}a_{j}\allowbreak =\allowbreak \lambda _{n,i}/\lambda
_{i,i}$ and by $\tilde{p}_{n}(x)$ let us denote the monic version of a
polynomial $p_{n}(x)$ then%
\begin{eqnarray*}
\tilde{p}_{n}(x) &=&\sum_{k=0}^{n}\eta _{n,k}x^{k}, \\
x^{n}\allowbreak &=&\allowbreak \sum_{k=0}^{n}\tau _{n,k}\tilde{p}_{k}(x).
\end{eqnarray*}

Coefficients $\left\{ \eta _{n,j},\tau _{n,j}\right\} _{n\geq 0,0\leq j\leq
n}$ satisfy the following system of equations:%
\begin{eqnarray}
\eta _{n+1,0} &=&-b_{n}\eta _{n,0}\allowbreak -a_{n}^{2}\eta _{n-1,0},
\label{_1} \\
\eta _{n+1,j} &=&\eta _{n,j-1}-b_{n}\eta _{n,j}-a_{n}^{2}\eta _{n-1,j},
\label{_2} \\
\tau _{n+1,0}\allowbreak &=&\allowbreak b_{0}\tau _{n,0}+\tau
_{n,1}a_{1}^{2},  \label{_3} \\
\tau _{n+1,j} &=&\tau _{n,j-1}+b_{j}\tau _{n,j}+a_{j+1}^{2}\tau _{n,j+1},
\label{_4}
\end{eqnarray}%
$n\geq 0,$ $j\leq n,$ with $\eta _{0,0}\allowbreak =\allowbreak \eta
_{n,n}\allowbreak =\allowbreak 1,$ and $\tau _{0,0}\allowbreak =\allowbreak
\tau _{n,n}\allowbreak =\allowbreak 1$ for $n\geq 0$.

iii) $\forall i>j:\sum_{k=i}^{j}\eta _{j,k}\tau _{k,i}\allowbreak
=\allowbreak 0\allowbreak =\allowbreak \sum_{k=i}^{j}\tau _{j,k}\eta _{k,i}$.
\end{proposition}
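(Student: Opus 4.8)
The plan is to derive all three parts from one elementary bookkeeping identity relating the leading coefficients to the recurrence data, namely $\pi_{n,n}=1/\lambda_{n,n}=1/\prod_{j=1}^{n}a_{j}$, together with the already-established relations $\mathbf{\Pi}_{n}\mathbf{\Lambda}_{n}=\mathbf{\Lambda}_{n}\mathbf{\Pi}_{n}=\mathbf{I}_{n}$ from (\ref{_x}) and the recursions (\ref{_f}), (\ref{_s}), (\ref{lam2}), (\ref{lam3}). For part i) I would argue directly from positivity of the leading coefficients: by the normalization convention fixing each $\pi_{n,n}>0$, and since $a_{n}=\pi_{n-1,n-1}/\pi_{n,n}$ by (\ref{coeff}), every $a_{n}$ with $n\geq 1$ is a ratio of two positive numbers, hence strictly positive (the value $a_{0}=0$ is of course excluded, so the assertion is to be read for $n\geq 1$). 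Equivalently one may use $a_{n}^{2}=\Delta_{n}\Delta_{n-2}/\Delta_{n-1}^{2}>0$ from the preceding Corollary, the determinants being positive by positive-definiteness of $\mathbf{M}_{n}$, and fix the sign from positivity of $\pi_{n,n}$.

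For the two displayed expansions in part ii) I would first note that telescoping $a_{n}=\pi_{n-1,n-1}/\pi_{n,n}$ with $\pi_{0,0}=1$ gives $\pi_{n,n}=1/\prod_{j=1}^{n}a_{j}=1/\lambda_{n,n}$. Since the monic polynomial is $\tilde{p}_{n}=p_{n}/\pi_{n,n}$, its coefficients are $\pi_{n,i}/\pi_{n,n}=\pi_{n,i}\prod_{j=1}^{n}a_{j}=\eta_{n,i}$, which is the first formula and in particular shows $\eta_{n,n}=1$. Writing $p_{i}=\pi_{i,i}\tilde{p}_{i}=\tilde{p}_{i}/\lambda_{i,i}$ and substituting into $x^{n}=\sum_{i}\lambda_{n,i}p_{i}$ from (\ref{inv_p}) yields $x^{n}=\sum_{i}(\lambda_{n,i}/\lambda_{i,i})\tilde{p}_{i}=\sum_{i}\tau_{n,i}\tilde{p}_{i}$, the second formula.

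The recurrences (\ref{_1})--(\ref{_4}) are then obtained mechanically. Substituting $\pi_{n,i}=\eta_{n,i}/\prod_{m=1}^{n}a_{m}$ into (\ref{_f}) and (\ref{_s}) and clearing the common factor $\prod_{m=1}^{n}a_{m}$, using $\prod_{m=1}^{n+1}a_{m}=a_{n+1}\prod_{m=1}^{n}a_{m}$ and $\prod_{m=1}^{n}a_{m}=a_{n}\prod_{m=1}^{n-1}a_{m}$, collapses the second-neighbour coefficient to $a_{n}^{2}$ and produces (\ref{_1}) and (\ref{_2}). Likewise, substituting $\lambda_{n,i}=\tau_{n,i}\lambda_{i,i}$ into (\ref{lam2}) and (\ref{lam3}) and using $\lambda_{i,i}=a_{i}\lambda_{i-1,i-1}$ gives (\ref{_3}) and (\ref{_4}). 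The boundary values $\eta_{0,0}=\tau_{0,0}=1$, $\eta_{n,n}=\tau_{n,n}=1$ follow from $\pi_{0,0}=1$, $\eta_{n,n}=\pi_{n,n}\prod_{j=1}^{n}a_{j}=1$ and $\tau_{n,n}=\lambda_{n,n}/\lambda_{n,n}=1$.

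Part iii) I would read as the biorthogonality statement that the lower-triangular matrices $[\eta_{j,k}]$ and $[\tau_{k,i}]$ are mutually inverse. Indeed, with $\mathbf{D}=\mathrm{diag}(\prod_{m=1}^{j}a_{m})$ one has $[\eta_{j,k}]=\mathbf{D}\,\mathbf{\Pi}_{n}$ and $[\tau_{k,i}]=\mathbf{\Lambda}_{n}\,\mathbf{D}^{-1}$, so the diagonal factors telescope and $\mathbf{\Pi}_{n}\mathbf{\Lambda}_{n}=\mathbf{\Lambda}_{n}\mathbf{\Pi}_{n}=\mathbf{I}_{n}$ from (\ref{_x}) gives $\mathbf{D}\mathbf{\Pi}_{n}\mathbf{\Lambda}_{n}\mathbf{D}^{-1}=\mathbf{I}_{n}$ and $\mathbf{\Lambda}_{n}\mathbf{D}^{-1}\mathbf{D}\mathbf{\Pi}_{n}=\mathbf{I}_{n}$. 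Reading off the off-diagonal entries, and noting that the triangular supports of $\eta$ and $\tau$ force the summation index into the range $i\leq k\leq j$, yields $\sum_{k=i}^{j}\eta_{j,k}\tau_{k,i}=0=\sum_{k=i}^{j}\tau_{j,k}\eta_{k,i}$ whenever $i\neq j$; the identity should therefore be understood for $j>i$ (the printed constraint $i>j$ being a slip that would render the sum empty). The only genuine care needed anywhere is the index bookkeeping in the recurrence substitutions — in particular matching the shift $\prod_{m=1}^{n+1}a_{m}=a_{n+1}\prod_{m=1}^{n}a_{m}$ against the second-neighbour term so its coefficient becomes $a_{n}^{2}$ rather than $a_{n}$ — but no real obstacle arises, since everything reduces to the single identity $\pi_{n,n}\lambda_{n,n}=1$ and the already proven $\mathbf{\Pi}_{n}\mathbf{\Lambda}_{n}=\mathbf{I}_{n}$.
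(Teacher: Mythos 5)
Your proposal is correct and follows essentially the paper's own route: part i) from (\ref{coeff}) and (\ref{wsp}) together with the positive-leading-coefficient convention, part ii) by rescaling (\ref{_f}), (\ref{_s}) and (\ref{lam1})--(\ref{lam3}) by the product $\prod_{j}a_{j}$, and part iii) from $\mathbf{\Pi}_{n}\mathbf{\Lambda}_{n}=\mathbf{\Lambda}_{n}\mathbf{\Pi}_{n}=\mathbf{I}_{n}$, where your diagonal conjugation by $\mathbf{D}$ is exactly the paper's entrywise cancellation $\lambda_{j,k}\pi_{k,i}=\tau_{j,k}\eta_{k,i}$ written in matrix form. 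You are also right that the printed condition $i>j$ in iii) is a slip for $j>i$ (the paper's own proof works with $j>i$), and your write-up is in fact more complete than the paper's partly garbled proof text.
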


\begin{proof}
Is shifted to Section \ref{dow}.
\end{proof}

\begin{proposition}
\label{aux}Let us consider $4$ auxiliary number sequences $\left\{ \xi
_{n,j}^{\left( i\right) }\right\} _{n,j\geq 0},$ $\left\{ \zeta
_{n,j}^{\left( i\right) }\right\} _{n,j\geq 0}$, $i\allowbreak =\allowbreak
1,2$ satisfying the following systems of recurrences for $n\geq 0$, 
\begin{eqnarray}
\xi _{n+1,0}^{(1)}\allowbreak &=&\allowbreak -a_{n}^{2}\xi
_{n-1,0}^{(1)},~\xi _{n+1,0}^{(2)}=-b_{n}\xi _{n,0}^{(2)},  \label{_11} \\
\xi _{n+1,j}^{(1)}\allowbreak &=&\allowbreak \xi _{n,j-1}^{(1)}-a_{n}^{2}\xi
_{n-1,j}^{(1)},~\xi _{n+1,j}^{(2)}=\xi _{n,j-1}^{(2)}-b_{n}\xi _{n,j}^{(2)},
\label{_22} \\
\zeta _{n+1,0}^{(1)}\allowbreak &=&\allowbreak a_{1}^{2}\zeta
_{n,1}^{(1)},~\zeta _{n+1,0}^{(2)}=b_{0}\zeta _{n,0}^{(2)},  \label{_33} \\
\zeta _{n+1,j}^{(1)}\allowbreak &=&\allowbreak \zeta
_{n,j-1}^{(1)}+a_{j+1}^{2}\zeta _{n,j+1}^{(1)},~\zeta _{n+1,j}^{(2)}=\zeta
_{n,j-1}^{(2)}+b_{j}\zeta _{n,j}^{(2)}.  \label{_44}
\end{eqnarray}%
with $\allowbreak \allowbreak \xi _{n,j}^{(i)}\allowbreak =0$ when $%
j>n\allowbreak ,$ for $i=1,2$ and $\xi _{0,0}^{(1)}\allowbreak =\xi
_{n,n}^{(1)}\allowbreak =\allowbreak \zeta _{0,0}^{(2)}\allowbreak
=\allowbreak \zeta _{n,n}^{(2)}\allowbreak =\allowbreak 1$. Then for $j\geq
1 $ 
\begin{gather}
\xi _{n+j,n}^{(1)}=\left\{ 
\begin{array}{ccc}
0 & if & j=2k+1 \\ 
(-1)^{k}\sum_{\substack{ 1\leq j_{1}<\ldots <j_{k}\leq n+j-1  \\ %
j_{m+1}-j_{m}\geq 2,m=1,\ldots ,k-1}}\prod_{m=1}^{k}a_{j_{m}}^{2} & if & j=2k%
\end{array}%
\right. ,  \label{sol11} \\
\xi _{n+j,n}^{(2)}=(-1)^{j}\sum_{0\leq k_{1}<\ldots <k_{j}\leq
n+j-1}\prod_{m=1}^{j}b_{k_{m}},  \label{sol12} \\
\zeta _{n+l,n}^{(1)}\allowbreak =\allowbreak \left\{ 
\begin{array}{ccc}
0 & if & l=2k+1 \\ 
\sum_{j_{1}=1}^{n+1}a_{j_{1}}^{2}\sum_{j_{2}=1}^{j_{1}+1}a_{j_{1}}^{2}\ldots
\sum_{j_{k}=1}^{j_{k-1}+1}a_{j_{k}}^{2} & if & l=2k%
\end{array}%
\right. ,  \label{sol21} \\
\zeta _{n+j,n}^{(2)}\allowbreak =\allowbreak
\sum_{k_{1}=0}^{n}b_{k_{1}}\sum_{k_{2}=k_{1}}^{n}b_{k_{2}}\ldots
\sum_{k_{j}=k_{j-1}}^{n}b_{k_{j}}.  \label{sol22}
\end{gather}
\end{proposition}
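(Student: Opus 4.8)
The plan is to solve each of the four recurrences by the same mechanism: since each system is a constant-shift linear recurrence in the index $n$ with boundary data, I would unwind it by induction on the number of steps and recognize the resulting sum as an iterated/ordered convolution. Concretely, I would prove the four formulas \eqref{sol11}--\eqref{sol22} by induction on the ``height'' $j$ (or $l$) that measures how far the second index sits below the first, treating the two $\xi$-sequences (which decrease the first index via the $-a_n^2\xi_{n-1,\cdot}$ or $-b_n\xi_{n,\cdot}$ term) and the two $\zeta$-sequences (which increase it) in parallel. The cleanest entry point is \eqref{sol12} and \eqref{sol22}, the $b$-only cases, because there the recurrences \eqref{_22} and \eqref{_44} involve no shift in the running index inside the coefficient, so the induction is a direct telescoping.

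First I would establish \eqref{sol12}. From \eqref{_22}, the second recurrence reads $\xi_{n+1,j}^{(2)}=\xi_{n,j-1}^{(2)}-b_n\xi_{n,j}^{(2)}$, with $\xi_{n,j}^{(2)}=0$ for $j>n$. Iterating the diagonal $j\mapsto n+j$ and peeling off one factor of $-b_{k}$ at each step, I expect each application to prepend a new index $k_m$ strictly below the previous ones, which produces exactly the ordered sum $(-1)^{j}\sum_{0\le k_1<\cdots<k_j\le n+j-1}\prod_m b_{k_m}$; the sign $(-1)^j$ is the accumulated sign from the $j$ subtractions. The strict inequalities come from the fact that lowering the first index by one at each stage forces the newly introduced summation index to lie strictly below the range used at the next stage. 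For \eqref{sol22} the mechanism is the mirror image: \eqref{_44} gives $\zeta_{n+1,j}^{(2)}=\zeta_{n,j-1}^{(2)}+b_j\zeta_{n,j}^{(2)}$, which now keeps the first index and multiplies by $+b_j$, so iterating yields the nested sum $\sum_{k_1=0}^{n}b_{k_1}\sum_{k_2=k_1}^{n}b_{k_2}\cdots$ with the weak inequalities $k_{m}\ge k_{m-1}$ reflecting that the running upper index is not lowered.

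Next I would handle the $a^2$-cases \eqref{sol11} and \eqref{sol21}. These are structurally the same inductions but with two new features: the coefficient is $a_n^2$ (or $a_{j+1}^2$), and crucially the backward recurrence \eqref{_11}--\eqref{_22} steps the first index down by \emph{two} (through $\xi_{n-1,\cdot}$) rather than one. This is what produces the parity split: an odd gap $j=2k+1$ cannot be filled by two-step descents and forces $\xi_{n+j,n}^{(1)}=0$, while an even gap $j=2k$ yields $k$ factors $a_{j_m}^2$. The gap condition $j_{m+1}-j_m\ge 2$ in \eqref{sol11} is the combinatorial trace of the two-step descent: consecutive chosen indices must be separated by at least $2$, exactly as in the standard expansion of a tridiagonal determinant (a continuant) in terms of non-adjacent off-diagonal products. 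I would make this precise by the same induction on $k$, verifying that the recurrence reproduces the non-adjacency constraint.

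The main obstacle is bookkeeping the index ranges in the $a^2$-case, i.e.\ getting the summation bounds and the non-adjacency constraint $j_{m+1}-j_m\ge2$ in \eqref{sol11} (and the nested upper limits $j_{m}\le j_{m-1}+1$ in \eqref{sol21}) exactly right through the induction, rather than the underlying algebra, which is routine telescoping. A secondary subtlety is verifying that the stated boundary conditions ($\xi_{0,0}^{(1)}=\zeta_{0,0}^{(2)}=1$ and the vanishing for $j>n$) are consistent with the closed forms at the base of the induction, including the degenerate cases $k=0$ (empty product equal to $1$) and $j=0$. Once the non-adjacency/nesting constraints are pinned down, each of the four identities closes by a one-line inductive step matching the recurrence against the closed form.
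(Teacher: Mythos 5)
Your proposal is correct and takes essentially the same route as the paper: both arguments solve the four systems by induction, obtaining the vanishing at odd gaps from the two-step descent in the $a^{2}$-recurrences, and then verifying that each closed form (non-adjacent products for $\xi^{(1)}$, strictly ordered products for $\xi^{(2)}$, weakly nested sums for $\zeta^{(1)}$ and $\zeta^{(2)}$) satisfies the same recurrence with the same initial data. The paper merely packages your ``one-line inductive step'' as a uniqueness argument for auxiliary sequences defined by the right-hand sides of (\ref{sol11})--(\ref{sol22}), adding in the non-adjacent case a Pascal-triangle count of the summands, which matches your continuant interpretation.
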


\begin{proof}
Is shifted to Section \ref{dow}
\end{proof}

\begin{proposition}
\label{part} i) Let us denote $\hat{\eta}_{n,k}\allowbreak =\allowbreak \eta
_{n,k}-\xi _{n,k}^{(1)}-\xi _{n,k}^{(2)}$ and $\hat{\tau}_{n,k}\allowbreak
=\allowbreak \tau _{n,k}-\zeta _{n,k}^{(1)}-\zeta _{n,k}^{(2)},$ for $n\geq
k\geq 0.$ We have:%
\begin{eqnarray}
\hat{\eta}_{n+1,k} &=&\hat{\eta}_{n,k-1}-b_{n}\hat{\eta}_{n,k}-a_{n}^{2}\hat{%
\eta}_{n-1,k}-b_{n}\xi _{n,k}^{(1)}-a_{n}^{2}\xi _{n-1,k}^{(2)},
\label{aux1} \\
\hat{\tau}_{n+1,k} &=&\hat{\tau}_{n,k-1}+b_{k}\hat{\tau}_{n,k}+a_{k+1}^{2}%
\hat{\tau}_{n,k+1}+a_{k+1}^{2}\zeta _{n,k+1}^{(2)}+b_{k}\zeta _{n,k}^{(1)},
\label{aux2}
\end{eqnarray}%
with $\hat{\eta}_{0,0}\allowbreak =\allowbreak \hat{\eta}_{n,n}\allowbreak
=\allowbreak \allowbreak \hat{\tau}_{0,0}\allowbreak =\allowbreak \hat{\tau}%
_{n,n}\allowbreak \allowbreak =\allowbreak -1$. In particular we have:

ii) $\eta _{n+1,n}\allowbreak =\allowbreak \xi _{n+1,n}^{(2)}\allowbreak
=\allowbreak -\tau _{n+1,n}$ for $n\geq 0,$

iii) $\eta _{n+2,n}\allowbreak =\allowbreak \tau _{n+2,n}\allowbreak
=\allowbreak \xi _{n+2,n}^{(2)}\allowbreak +\xi _{n+2,n}^{(1)}\allowbreak ,$
for $n\geq 0,$

iv) $\tau _{n+3,n}\allowbreak =\allowbreak \zeta _{n+3,n}^{(2)}+\zeta
_{n+2,n}^{(1)}\zeta _{n+1,n}^{(2)}+\sum_{j=1}^{n+1}a_{j}^{2}(b_{j-1}+b_{j}),$

$\eta _{n+3,n}\allowbreak =\allowbreak \xi
_{n+3,3}^{(2)}+\sum_{j=1}^{n+2}a_{j}^{2}\sum_{k=0,k\neq j,j-1}^{n+2}b_{k},$ 
\newline
$\eta _{n+4,n}\allowbreak =\allowbreak \xi _{n+4,n}^{(1)}+\xi
_{n+4,n}^{(2)}+\sum_{k=1}^{n+3}a_{i}^{2}\sum_{0\leq i<j\leq n+3,i,j\neq
k.k-1}b_{i}b_{j},$ \newline
$\tau _{n+4,n}\allowbreak =\allowbreak -\eta _{n+4,n}-\eta _{n+4,n+1}\tau
_{n+1,n}-\eta _{n+4,n+2}\tau _{n+2,n}-\eta _{n+4,n+3}\tau _{n+3,n}.$

Assume that $\forall n\geq 0:b_{i}\allowbreak =\allowbreak 0,$ then:

v) $\eta _{n,0}\allowbreak =\allowbreak \left\{ 
\begin{array}{ccc}
0 & if & n=2k-1 \\ 
(-1)^{k}\prod_{j=1}^{k}a_{2j-1}^{2} & if & n=2k%
\end{array}%
\right. ,$ $k\allowbreak =\allowbreak 1,2,\ldots .$

\begin{equation}
\eta _{n+l,n}\allowbreak =\xi _{n+l,n}^{(1)},\tau _{n+l,n}\allowbreak
=\allowbreak \zeta _{n+l,n}^{(1)},  \label{sol2}
\end{equation}

for $l,\allowbreak n\geq 0.$
\end{proposition}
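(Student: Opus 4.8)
The plan is to regard the four sequences of Proposition \ref{aux} as an explicit particular solution and to show that the defect sequences $\hat{\eta}$, $\hat{\tau}$ satisfy a closed lower-triangular recurrence whose data are concentrated on a thin band adjacent to the diagonal. First I would establish i) by direct substitution. Subtracting the recurrences (\ref{_11}) and (\ref{_22}) for $\xi^{(1)},\xi^{(2)}$ from the recurrence (\ref{_1})--(\ref{_2}) for $\eta$ and collecting terms, the combination $\eta_{n,k}-\xi_{n,k}^{(1)}-\xi_{n,k}^{(2)}$ appears in the $\hat{\eta}_{n,k-1}$ slot, while the two remaining groups regroup via the bookkeeping identities $\eta_{n,k}-\xi_{n,k}^{(2)}=\hat{\eta}_{n,k}+\xi_{n,k}^{(1)}$ and $\eta_{n-1,k}-\xi_{n-1,k}^{(1)}=\hat{\eta}_{n-1,k}+\xi_{n-1,k}^{(2)}$; this produces exactly (\ref{aux1}), and the same manipulation of (\ref{_3})--(\ref{_4}) against (\ref{_33}), (\ref{_44}) produces (\ref{aux2}). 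The initial values follow from the fact that all four auxiliary sequences equal $1$ on the diagonal (the empty-product/empty-sum cases of (\ref{sol11})--(\ref{sol22})), so $\hat{\eta}_{0,0}=\hat{\tau}_{0,0}=1-1-1=-1$.

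For ii) and iii) I would run a ``level induction.'' Set $e_{l}(m)=\hat{\eta}_{m+l,m}$ and read (\ref{aux1}) along a fixed diagonal $l$: the term $\hat{\eta}_{n,k-1}$ stays on level $l$, while $\hat{\eta}_{n,k}$ and $\hat{\eta}_{n-1,k}$ drop to levels $l-1$ and $l-2$, giving $e_{l}(m)=e_{l}(m-1)+(\text{lower-level and source terms})$. On level $l=1$ the lower-level contribution is $-b_{m}\hat{\eta}_{m,m}=b_{m}$ and the source contribution is $-b_{m}\xi_{m,m}^{(1)}=-b_{m}$, which cancel (the remaining slots vanish, being strictly upper triangular); hence $e_{1}(m)=e_{1}(m-1)$, and the base $e_{1}(0)=\hat{\eta}_{1,0}=0$ is checked from $\tilde{p}_{1}=x-b_{0}$. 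On level $l=2$ the same cancellation occurs between $-a_{m+1}^{2}\hat{\eta}_{m,m}=a_{m+1}^{2}$ and the source $-a_{m+1}^{2}\xi_{m,m}^{(2)}=-a_{m+1}^{2}$, using $\hat{\eta}_{m+1,m}=0$ from level $1$; with base $e_{2}(0)=\hat{\eta}_{2,0}=0$ from $\tilde{p}_{2}=x^{2}-(b_{0}+b_{1})x+b_{0}b_{1}-a_{1}^{2}$, this gives $\hat{\eta}_{n+2,n}\equiv0$. Since $\xi_{n+l,n}^{(1)}=0$ for odd $l$, these yield $\eta_{n+1,n}=\xi_{n+1,n}^{(2)}$ and $\eta_{n+2,n}=\xi_{n+2,n}^{(1)}+\xi_{n+2,n}^{(2)}$; the identical argument for $\hat{\tau}$ gives the $\tau$ statements, and the equality $\eta_{n+1,n}=-\tau_{n+1,n}$ is just the negation of subdiagonals in a pair of mutually inverse unit lower triangular matrices, i.e. the $i=n,\,j=n+1$ case of Proposition \ref{3t-r} iii).

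Part iv) continues the same telescoping one and two levels further. On level $l=3$ the lower-level terms now vanish (levels $1,2$ are zero) but the source terms $-b_{m+2}\xi_{m+2,m}^{(1)}$ and $-a_{m+2}^{2}\xi_{m+1,m}^{(2)}$ survive; substituting the closed forms (\ref{sol11}) (with $j=2$) and (\ref{sol12}) (with $j=1$) and summing the telescope from the base $\hat{\eta}_{3,0}=0$ reproduces the stated double sum for $\eta_{n+3,n}$, and likewise for $\tau_{n+3,n}$ and $\eta_{n+4,n}$. The final formula for $\tau_{n+4,n}$ is not obtained from the band recurrence at all but directly from $\sum_{k=n}^{n+4}\eta_{n+4,k}\tau_{k,n}=0$ (Proposition \ref{3t-r} iii)) together with $\eta_{n+4,n+4}=\tau_{n,n}=1$.

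Finally, for v) I would specialise to $b_{i}\equiv0$. Then (\ref{sol12}) and (\ref{sol22}) collapse to $\xi_{n,j}^{(2)}=\zeta_{n,j}^{(2)}=\delta_{n,j}$, so the defect recurrence (\ref{aux1}) loses its $b$-source and reduces to $\hat{\eta}_{n+1,k}=\hat{\eta}_{n,k-1}-a_{n}^{2}\hat{\eta}_{n-1,k}-a_{n}^{2}\delta_{n-1,k}$; an induction on the level $l$ then shows $e_{l}(m)=e_{l}(m-1)$ for every $l\geq1$ (the single surviving source acts only at $l=2$, already handled), and the bases $\hat{\eta}_{l,0}=0$ follow from $\eta_{l,0}=\xi_{l,0}^{(1)}$, giving (\ref{sol2}); the symmetric argument handles $\tau$. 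The explicit value of $\eta_{n,0}$ is cleanest straight from the two-term recurrence $\eta_{n+1,0}=-a_{n}^{2}\eta_{n-1,0}$ (the $b\equiv0$ case of (\ref{_1})), which forces the odd terms to vanish and telescopes the even ones into $(-1)^{k}\prod_{j=1}^{k}a_{2j-1}^{2}$; equivalently it is the unique gap-$\geq2$ index configuration in (\ref{sol11}). The routine content is the substitution in i); the delicate points are verifying the exact source/lower-level cancellations driving ii)--iii) and, above all, matching the telescoped sums in iv) to the precise index ranges $k\neq j,j-1$, which is where sign and range errors are most likely to intrude.
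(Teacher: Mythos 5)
Your proposal follows essentially the same route as the paper's proof: part i) by the same direct substitution, subtracting (\ref{_11})--(\ref{_22}) from (\ref{_1})--(\ref{_2}) and regrouping (likewise for $\hat{\tau}$); parts ii)--iii) by exactly the paper's device of reading (\ref{aux1})--(\ref{aux2}) along fixed subdiagonals and exploiting the cancellations $-b_{n}\hat{\eta}_{n,n}-b_{n}\xi _{n,n}^{(1)}=0$ and $-a_{n}^{2}\hat{\eta}_{n-1,n-1}-a_{n}^{2}\xi _{n-1,n-1}^{(2)}=0$; part iv) by continuing the telescope with the closed forms of Proposition \ref{aux} and getting $\tau _{n+4,n}$ from the inverse-matrix identity of Proposition \ref{3t-r} iii), which is also how the paper's statement is meant; and part v) by specializing $b\equiv 0$ and an induction on the level, together with the two-term recurrence $\eta _{n+1,0}=-a_{n}^{2}\eta _{n-1,0}$ for the explicit value of $\eta _{n,0}$ --- all matching Section \ref{dow}. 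Your derivation of $\eta _{n+1,n}=-\tau _{n+1,n}$ from Proposition \ref{3t-r} iii) rather than from the explicit values $\xi _{n+1,n}^{(2)}=-\sum_{k=0}^{n}b_{k}=-\zeta _{n+1,n}^{(2)}$ is an equivalent shortcut, as is your retention of the Kronecker-delta source in v) where the paper absorbs the vanishing $\xi ^{(2)},\zeta ^{(2)}$ into redefined defects.

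One index slip in iv) needs repair: the base of the level-$3$ telescope is not $\hat{\eta}_{3,0}=0$. Indeed $\eta _{3,0}=-b_{0}b_{1}b_{2}+a_{1}^{2}b_{2}+a_{2}^{2}b_{0}$, while $\xi _{3,0}^{(2)}=-b_{0}b_{1}b_{2}$ and $\xi _{3,0}^{(1)}=0$, so $\hat{\eta}_{3,0}=a_{1}^{2}b_{2}+a_{2}^{2}b_{0}\neq 0$; this is precisely the $m=0$ increment of your telescope started from the vacuous value $\hat{\eta}_{2,-1}=0$. As literally written --- value $0$ at $m=0$, increments summed from $m=1$ --- your formula for $\eta _{n+3,n}$ would lose exactly these two terms (already visible at $n=0$, where the asserted double sum equals $a_{1}^{2}b_{2}+a_{2}^{2}b_{0}$, not $0$). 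The fix is trivial: anchor the telescope one step earlier at $\hat{\eta}_{2,-1}=0$ and include the $m=0$ step; then the increments $b_{m+2}\sum_{j=1}^{m+1}a_{j}^{2}+a_{m+2}^{2}\sum_{k=0}^{m}b_{k}$ do sum to the stated expression, and the rest of your argument stands.
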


\begin{proof}
Is shifted to Section \ref{dow}
\end{proof}

\begin{remark}
As pointed out in Proposition \ref{3t-r},ii) coefficients $\eta _{i,j}$ are
the power coefficients of monic orthogonal polynomials i.e. orthogonal
polynomials with the leading coefficient equal to $1.$ The similar formula
to (\ref{sol2}) for orthogonal polynomials on the unit circle was proved by
in \cite{Gol07}. Formulae given in assertion ii) and iii) were given in \cite%
{Chih79} (Thm. 4.2 (d) and ibidem Exercise 4.1, p. 24). We present them for
completeness of the paper.
\end{remark}

\begin{remark}
Notice that $(-1)^{k}\sum_{\substack{ 1\leq j_{1}<\ldots <j_{k}\leq n-1  \\ %
j_{m+1}-j_{m}\geq 2,m=1,\ldots ,k-1}}\prod_{m=1}^{k}a_{j_{m}}^{2}$ can also
be written as 
\begin{equation*}
(-1)^{k}\sum_{j_{1}=1}^{n-2k+1}a_{j_{1}}^{2}%
\sum_{j_{2}=j_{1}+2}^{n-2k+3}a_{j_{2}}^{2}\ldots
\sum_{j_{k}=j_{k-1}+2}^{n-1}a_{j_{k}}^{2}.
\end{equation*}
\end{remark}

As a corollary we get the following recursive formula expressing moments in
terms of the coefficients $a_{n}$ and $b_{n}$ of the 3-term recurrence.

\begin{proposition}
\label{moments}i) $m_{j}\allowbreak =\allowbreak -\sum_{k=1}^{j-1}\eta
_{j-1,k-1}m_{k},$

If we assume that all coefficients $b_{n}\allowbreak =\allowbreak 0$ $n\geq
0,$ then we have simplified version of the previous statement:

ii) $m_{2k-1}\allowbreak =\allowbreak 0$ $k\allowbreak =\allowbreak
1,2,\ldots ,$ $m_{4}\allowbreak =\allowbreak a_{1}^{2}(a_{1}^{2}+a_{2}^{2}),$
$m_{2k}\allowbreak =\allowbreak
(\sum_{j=1}^{2k-2}a_{j}^{2})m_{2k-2}-\sum_{j=2}^{k-1}\eta
_{2k-1,2k-1-2j}m_{2k-2j},$ $k\geq 3.$
\end{proposition}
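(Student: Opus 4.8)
The plan is to prove Proposition \ref{moments} by exploiting the recursive structure encoded in the coefficients $\eta_{n,k}$, which by Proposition \ref{3t-r},ii) are precisely the power-series coefficients of the monic orthogonal polynomials $\tilde{p}_n(x) = \sum_{k=0}^{n}\eta_{n,k}x^k$. The fundamental tool for part i) is the orthogonality relation itself: since $\tilde{p}_{j-1}$ is orthogonal to all polynomials of lower degree, and in particular $\int \tilde{p}_{j-1}(x)\,d\alpha(x) = 0$ for $j-1 \geq 1$, integrating the power-series expansion term by term should directly yield a linear relation among the moments $m_0, m_1, \ldots, m_{j-1}$.

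First I would write out $\int \tilde{p}_{j-1}(x)\,d\alpha(x) = \sum_{k=0}^{j-1}\eta_{j-1,k}m_k = 0$, valid for $j \geq 2$ because $\tilde{p}_{j-1}$ is orthogonal to the constant polynomial $p_0 = 1$. Using the normalization $\eta_{j-1,j-1} = 1$ established in Proposition \ref{3t-r},ii), I would isolate the top moment and reindex. Care is needed here with the index bookkeeping: the stated formula $m_j = -\sum_{k=1}^{j-1}\eta_{j-1,k-1}m_k$ carries a shift in the second subscript of $\eta$, so I would check whether the intended orthogonality is against $1$ giving moments $m_0,\dots,m_{j-1}$, or whether the shift reflects the recurrence (\ref{_2}) being used to re-express one layer of coefficients. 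The cleanest route is probably to substitute the recurrence $\eta_{j-1,k-1}$ relation from (\ref{_1})--(\ref{_2}) into the vanishing integral, thereby trading the orthogonality statement for the explicit three-term structure and confirming the displayed index shift.

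For part ii) I would specialize to the symmetric case $b_n = 0$ for all $n$. Here the moments of odd order vanish, which follows immediately either from symmetry of the measure or, staying self-contained, from Proposition \ref{part},v) combined with part i): an easy induction shows $m_{2k-1} = 0$ using that $\eta_{2k-2,2j}$ pairs only even-indexed $\eta$ with even-indexed moments. For the even moments I would feed the known values $\eta_{n,0}$ from Proposition \ref{part},v) and the top-layer coefficients $\eta_{n,k} = \xi_{n,k}^{(1)}$ from (\ref{sol2}) into the recursion of part i), pulling out the leading term $(\sum_{j=1}^{2k-2}a_j^2)m_{2k-2}$ from the $\eta_{2k-1,2k-3}$ coefficient (which by the one-step solution equals $-\sum_{j=1}^{2k-2}a_j^2$) and leaving the remaining sum as the correction term; the base case $m_4 = a_1^2(a_1^2 + a_2^2)$ I would verify by direct substitution.

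The main obstacle will be the index arithmetic in reconciling the subscript shift of part i) with the orthogonality derivation, and in part ii) correctly extracting the coefficient $-\sum_{j=1}^{2k-2}a_j^2$ as the second power coefficient of $\tilde{p}_{2k-1}$ from formula (\ref{sol11}) with $j=2$ (equivalently, identifying $\xi_{2k-1,2k-3}^{(1)}$ as $-\sum a_{j}^2$ over the appropriate range). Everything else is a routine substitution into the already-solved recurrences, so I expect the proof to be short once the indexing is pinned down; indeed the author relegates it to a corollary-style statement, signaling that the content is a direct consequence of Propositions \ref{3t-r} and \ref{part}.
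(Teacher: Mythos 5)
Your proposal is correct and follows essentially the same route as the paper: the paper's proof of i) starts from the identity $\sum_{k=0}^{j}\eta _{j,k}m_{k}=0$ (your orthogonality relation $\int \tilde{p}_{j}\,d\alpha =0$, taken at level $j$ rather than $j-1$), substitutes the recurrence (\ref{_2}), and uses the same identity at levels $j-1$ and $j-2$ to annihilate the $b_{j-1}$- and $a_{j-1}^{2}$-sums, which produces exactly the index shift you flagged. For ii) the paper likewise discards the odd moments and extracts $\eta _{2k-1,2k-3}=-\sum_{i=1}^{2k-2}a_{i}^{2}$ from (\ref{sol11}) via Proposition \ref{part}, just as you propose.
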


\begin{proof}
i) We use the (\ref{_x}) and (\ref{mom}) which leads to the identity $%
\forall j\geq 1$%
\begin{equation*}
\sum_{k=0}^{j}\eta _{j,k}m_{k}\allowbreak =\allowbreak 0.
\end{equation*}%
Consequently $m_{j}\allowbreak =\allowbreak -\sum_{k=0}^{j-1}\eta
_{j,k}m_{k}.$ Now we utilize (\ref{_2}) and get: 
\begin{eqnarray*}
m_{j}\allowbreak &=&\allowbreak -\sum_{k=0}^{j-1}(-b_{j-1}\eta
_{j-1,k}-a_{j-1}^{2}\eta _{j-2,k}+\eta _{j-1,k-1})m_{k} \\
&=&b_{j-1}\sum_{k=0}^{j-1}\eta _{j-1,k}m_{k}+a_{j-1}^{2}\sum_{k=0}^{j-2}\eta
_{j-2,k}m_{k}-\sum_{k=0}^{j-1}\eta _{j-1,k-1}m_{k} \\
&=&-\sum_{k=1}^{j-1}\eta _{j-1,k-1}m_{k}.
\end{eqnarray*}

ii) By i) we have $m_{2j}\allowbreak =\allowbreak \allowbreak
-\sum_{k=1}^{2j-1}\eta _{2j-1,k-1}m_{k}\allowbreak =\allowbreak
-\sum_{n=1}^{j-1}\eta _{2j-1,2n-1}m_{2n}$ since $m_{j}$ with odd $j$ are
equal to zero. Now we recall that $\eta _{2j-1,2j-3}\allowbreak =\allowbreak
-\sum_{i=1}^{2j-2}a_{i}^{2}$ by Proposition \ref{3t-r}, iv).
\end{proof}

\section{Connection coefficients and Radon--Nikodym derivatives.\label%
{con-coef}}

In this subsection we will express the so called connection coefficients
between two different sets of $N-$orthogonal polynomials. Hence let us
assume that we have two moment matrices $\mathbf{M}_{N}\left( \alpha \right) 
$ and $\mathbf{M}_{N}\left( \delta \right) .$ Let $\mathbf{L}_{N}\left(
\alpha \right) $ and $\mathbf{L}_{N}(\delta )$ be their Cholesky
decomposition matrices and $\left\{ \mathbf{P}_{N}\left( x,\alpha \right)
\right\} $ and $\left\{ \mathbf{P}_{N}(x,\delta )\right\} $ respective sets
of $N-$orthogonal polynomials. Then we have

\begin{lemma}
\label{con-c}We have 
\begin{equation*}
\mathbf{P}_{N}(x,\delta )\allowbreak =\allowbreak \mathbf{L}_{N}^{-1}(\delta
)\mathbf{L}_{N}(\alpha )\mathbf{P}_{N}\left( x,\alpha \right) ,
\end{equation*}%
or more precisely for all $n\allowbreak =\allowbreak 1,\ldots ,N:$ 
\begin{equation*}
p_{n}(x,\delta )\allowbreak =\allowbreak \sum_{k=0}^{n}\gamma _{n,k}(\delta
,\alpha )p_{k}\left( x,\alpha \right) ,
\end{equation*}%
where 
\begin{equation}
\gamma _{n,k}(\delta ,\alpha )\allowbreak =\allowbreak \sum_{j=k}^{n}\pi
_{n,j}\left( \delta \right) \lambda _{j,k}(\alpha ).  \label{conn}
\end{equation}%
Moreover, if we assume that polynomials $\left\{ \tilde{p}_{n}(x,\delta ),%
\tilde{p}_{n}(x,\alpha )\right\} _{n\geq 0}^{N}$ are assumed to be monic
then we have the same formula with coefficients $\pi $ replaced by $\eta $
and $\lambda $ by $\tau $ both defined in Proposition \ref{3t-r}.
\end{lemma}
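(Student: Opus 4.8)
The plan is to read the statement off from the two change-of-basis relations (\ref{_p}) and (\ref{_x}) applied to the two measures separately, and then to translate into the Cholesky language via Proposition \ref{interpretacja}. First I would write the vector of $\delta$-polynomials in terms of monomials using (\ref{_p}) for $\delta$, namely $\mathbf{P}_{N}(x,\delta )=\mathbf{\Pi }_{N}(\delta )\mathbf{X}_{N}$, and then express the monomial vector in terms of the $\alpha$-polynomials using (\ref{_x}) for $\alpha$, namely $\mathbf{X}_{N}=\mathbf{\Lambda }_{N}(\alpha )\mathbf{P}_{N}(x,\alpha )$. Substituting the second into the first gives $\mathbf{P}_{N}(x,\delta )=\mathbf{\Pi }_{N}(\delta )\mathbf{\Lambda }_{N}(\alpha )\mathbf{P}_{N}(x,\alpha )$. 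Now Proposition \ref{interpretacja} i) identifies $\mathbf{\Pi }_{N}(\delta )=\mathbf{L}_{N}^{-1}(\delta )$ and $\mathbf{\Lambda }_{N}(\alpha )=\mathbf{L}_{N}(\alpha )$, which turns the last display into the asserted matrix identity $\mathbf{P}_{N}(x,\delta )=\mathbf{L}_{N}^{-1}(\delta )\mathbf{L}_{N}(\alpha )\mathbf{P}_{N}(x,\alpha )$.

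For the scalar form I would take the $(n,k)$ entry of the product matrix $\mathbf{\Pi }_{N}(\delta )\mathbf{\Lambda }_{N}(\alpha )$, which is $\sum_{j}\pi _{n,j}(\delta )\lambda _{j,k}(\alpha )$. The only care needed is bookkeeping of the summation range: since $\mathbf{\Pi }_{N}$ is lower triangular we have $\pi _{n,j}(\delta )=0$ for $j>n$, and since $\mathbf{\Lambda }_{N}$ is lower triangular we have $\lambda _{j,k}(\alpha )=0$ for $j<k$, so the index $j$ runs exactly over $k\le j\le n$. This yields $\gamma _{n,k}(\delta ,\alpha )=\sum_{j=k}^{n}\pi _{n,j}(\delta )\lambda _{j,k}(\alpha )$ as in (\ref{conn}), and hence the stated expansion of $p_{n}(x,\delta )$ in the $\{p_{k}(x,\alpha )\}$.

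For the monic claim I would repeat the argument verbatim with the monic objects. Proposition \ref{3t-r} ii) provides precisely the monic analogues of (\ref{_p}) and (\ref{_x}), namely $\tilde{\mathbf{P}}_{N}(x)=\tilde{\mathbf{\Pi }}_{N}\mathbf{X}_{N}$ with $\tilde{\mathbf{\Pi }}_{N}=[\eta _{i,j}]$ and $\mathbf{X}_{N}=\tilde{\mathbf{\Lambda }}_{N}\tilde{\mathbf{P}}_{N}(x)$ with $\tilde{\mathbf{\Lambda }}_{N}=[\tau _{i,j}]$, where $\tilde{\mathbf{P}}_{N}(x)=(\tilde{p}_{0}(x),\ldots ,\tilde{p}_{N}(x))^{T}$. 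Chaining these two relations across the two measures gives $\tilde{\mathbf{P}}_{N}(x,\delta )=\tilde{\mathbf{\Pi }}_{N}(\delta )\tilde{\mathbf{\Lambda }}_{N}(\alpha )\tilde{\mathbf{P}}_{N}(x,\alpha )$, and reading off the $(n,k)$ entry exactly as above produces the same formula with $\pi $ replaced by $\eta $ and $\lambda $ replaced by $\tau $; the triangularity fixing the summation limits is again guaranteed by $\eta _{n,j}=0$ for $j>n$ and $\tau _{j,k}=0$ for $j<k$ (and the mutual-inverse relation of Proposition \ref{3t-r} iii) is available as a consistency check).

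I do not expect any genuine obstacle: the argument is a direct composition of two invertible lower-triangular changes of basis, one sending $\delta$-polynomials to monomials and the other sending monomials to $\alpha$-polynomials. The only points demanding attention are the invocation of Proposition \ref{interpretacja} i) to rewrite $\mathbf{\Pi }$ and $\mathbf{\Lambda }$ as $\mathbf{L}^{-1}$ and $\mathbf{L}$ (needed only to phrase the result in Cholesky form), and the elementary triangularity observation that pins down the range $k\le j\le n$ in (\ref{conn}).
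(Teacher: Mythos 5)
Your proposal is correct and follows essentially the same route as the paper: composing the two lower-triangular changes of basis $\mathbf{P}_{N}(x,\delta )=\mathbf{\Pi }_{N}(\delta )\mathbf{X}_{N}$ and $\mathbf{X}_{N}=\mathbf{L}_{N}(\alpha )\mathbf{P}_{N}(x,\alpha )$, invoking Proposition \ref{interpretacja} i) to identify $\mathbf{\Pi }$ and $\mathbf{\Lambda }$ with $\mathbf{L}^{-1}$ and $\mathbf{L}$, and handling the monic case by the analogous relation $\mathbf{X}_{N}=\mathbf{\tilde{L}}_{N}(\alpha )\mathbf{\tilde{P}}_{N}(x,\alpha )$ with entries $\tau _{i,j}$. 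You are merely more explicit than the paper about the triangularity bookkeeping that fixes the summation range $k\leq j\leq n$ in (\ref{conn}), which is a harmless elaboration.
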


\begin{proof}
This formula follows simple observation that 
\begin{equation*}
\mathbf{X}_{n}=\mathbf{L}_{n}(\alpha )\mathbf{P}_{n}(x,\alpha ).
\end{equation*}%
Then we apply Proposition \ref{interpretacja} i).

The fact that the same formula is satisfied by $\eta ^{\prime }s$ and $\tau
^{\prime }s$ instead by $\pi ^{\prime }s$ and $\lambda ^{\prime }s$ follows
the fact that we have $\mathbf{X}_{n}\allowbreak =\allowbreak \mathbf{\tilde{%
L}}_{n}(\alpha )\mathbf{\tilde{P}}_{n}(x,\alpha )$ where $\mathbf{\tilde{P}}%
_{n}(x,\alpha )$ denotes the vector $(1,\tilde{p}_{1}(\alpha ),\ldots ,%
\tilde{p}_{n}(\alpha ))^{T}$ while $\mathbf{\tilde{L}}_{n}(\alpha )$ denotes
lower triangular matrix with $(i,j)$ entry equal to $\tau _{i,j}(\alpha ).$
\end{proof}

\begin{corollary}
\label{f_few}Let $\left\{ b_{n}(\iota ),a_{n+1}(\iota )\right\} _{n\geq 0},$ 
$\iota \allowbreak =\allowbreak \delta ,\alpha $ denote coefficients of
3-term recurrences of polynomials respectively $\left\{ \tilde{p}%
_{n}(x,\delta )\right\} _{n\geq -1}$ and $\left\{ \tilde{p}_{n}(x,\alpha
)\right\} _{n\geq -1}.$ Then

i) 
\begin{equation*}
\gamma _{n,n-1}(\delta ,\alpha )\allowbreak =\allowbreak
\sum_{k=0}^{n-1}b_{k}(\alpha )-b_{k}(\delta ),
\end{equation*}

ii) 
\begin{gather*}
\gamma _{n,n-2}(\delta ,\alpha )\allowbreak =\allowbreak
\sum_{k=1}^{n-1}(a_{k}^{2}\left( \alpha \right) -a_{k}^{2}\left( \delta
\right) )\allowbreak +\allowbreak \frac{1}{2}(\sum_{j=0}^{n-2}(b_{j}(\alpha
)-b_{j}(\delta )))^{2}\allowbreak \\
+\allowbreak \frac{1}{2}\sum_{j=0}^{n-2}(b_{j}^{2}(\alpha )-b_{j}^{2}(\delta
))\allowbreak -\allowbreak b_{n-1}(\delta )\sum_{j=0}^{n-2}(b_{j}(\alpha
)-b_{j}(\delta )).
\end{gather*}
\end{corollary}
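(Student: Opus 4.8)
The plan is to take everything from the monic form of Lemma~\ref{con-c}, which gives $\gamma_{n,k}(\delta,\alpha)=\sum_{j=k}^{n}\eta_{n,j}(\delta)\tau_{j,k}(\alpha)$, and to insert the explicit closed forms for the near-diagonal coefficients $\eta$ and $\tau$ that are already collected in Propositions~\ref{aux} and~\ref{part}. Since here $k$ equals $n-1$ or $n-2$, the sum over $j$ collapses to two or three terms, each a product of coefficients lying on the first or second subdiagonal. Thus the proof splits into: (a) writing down those two or three terms, (b) substituting the known formulae, and (c) elementary algebra.

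For i) I would set $k=n-1$, so only $j=n-1,n$ contribute. Using $\eta_{n,n}=\tau_{n-1,n-1}=1$ (Proposition~\ref{3t-r},ii) these reduce to $\gamma_{n,n-1}(\delta,\alpha)=\eta_{n,n-1}(\delta)+\tau_{n,n-1}(\alpha)$. Proposition~\ref{part},ii identifies $\eta_{n,n-1}=\xi_{n,n-1}^{(2)}=-\tau_{n,n-1}$, and the explicit solution (\ref{sol12}) gives $\xi_{n,n-1}^{(2)}=-\sum_{k=0}^{n-1}b_{k}$. Reading the first coefficient in $\delta$ and the second in $\alpha$ then yields $\gamma_{n,n-1}(\delta,\alpha)=\sum_{k=0}^{n-1}(b_{k}(\alpha)-b_{k}(\delta))$ at once.

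For ii) I would set $k=n-2$, where the surviving terms are $\tau_{n,n-2}(\alpha)+\eta_{n,n-1}(\delta)\tau_{n-1,n-2}(\alpha)+\eta_{n,n-2}(\delta)$, again using $\eta_{n,n}=\tau_{n-2,n-2}=1$. Next I would insert Proposition~\ref{part},iii, writing $\eta_{n,n-2}=\xi_{n,n-2}^{(1)}+\xi_{n,n-2}^{(2)}$ and $\tau_{n,n-2}=\zeta_{n,n-2}^{(1)}+\zeta_{n,n-2}^{(2)}$, followed by the explicit solutions (\ref{sol11})--(\ref{sol22}). The $a^{2}$-contributions come only from $\zeta^{(1)}$ (appearing in $\tau_{n,n-2}(\alpha)$ with a $+$ sign) and $\xi^{(1)}$ (appearing in $\eta_{n,n-2}(\delta)$ with a $-$ sign), and these immediately assemble into $\sum_{k=1}^{n-1}(a_{k}^{2}(\alpha)-a_{k}^{2}(\delta))$. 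Everything else is quadratic in the $b$'s: the pieces $\zeta_{n,n-2}^{(2)}(\alpha)=\sum_{0\le k_{1}\le k_{2}\le n-2}b_{k_{1}}(\alpha)b_{k_{2}}(\alpha)$, $\xi_{n,n-2}^{(2)}(\delta)=\sum_{0\le k_{1}<k_{2}\le n-1}b_{k_{1}}(\delta)b_{k_{2}}(\delta)$, and the cross term $\eta_{n,n-1}(\delta)\tau_{n-1,n-2}(\alpha)=-(\sum_{k=0}^{n-1}b_{k}(\delta))(\sum_{k=0}^{n-2}b_{k}(\alpha))$.

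The main obstacle will be the bookkeeping of these $b$-products. The device I would use is $(\sum b)^{2}=\sum b^{2}+2\sum_{i<j}b_{i}b_{j}$ to turn the ordered double sums into completed squares. The one genuinely delicate point is that the $\delta$-sums coming from the $\xi$'s naturally run up to index $n-1$, whereas the $\alpha$-sums run only to $n-2$; peeling off the top term $b_{n-1}(\delta)$ is exactly what generates the completed square $\tfrac12(\sum_{j=0}^{n-2}(b_{j}(\alpha)-b_{j}(\delta)))^{2}$ together with the correction $-b_{n-1}(\delta)\sum_{j=0}^{n-2}(b_{j}(\alpha)-b_{j}(\delta))$, while the leftover $\tfrac12 b_{n-1}^{2}(\delta)$ terms cancel against each other. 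Once the index ranges are aligned in this way, the half-sum of squares $\tfrac12\sum_{j=0}^{n-2}(b_{j}^{2}(\alpha)-b_{j}^{2}(\delta))$ drops out and the stated formula for $\gamma_{n,n-2}(\delta,\alpha)$ follows.
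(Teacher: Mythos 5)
Your proposal is correct and takes essentially the same route as the paper: you expand the monic form of Lemma~\ref{con-c} into the two (resp.\ three) surviving near-diagonal terms, substitute $\eta_{n,n-1}=\xi^{(2)}_{n,n-1}=-\tau_{n,n-1}$ from Proposition~\ref{part},ii (resp.\ the decompositions of Proposition~\ref{part},iii together with the explicit solutions (\ref{sol11})--(\ref{sol22})), and finish with completing-the-square algebra. The paper compresses this last step into ``apply Proposition~\ref{part},iii and do some algebra,'' and your careful handling of the mismatched index ranges --- peeling off $b_{n-1}(\delta)$ to generate the square and the correction term, with the $\tfrac12 b_{n-1}^{2}(\delta)$ pieces cancelling --- correctly supplies exactly the omitted details.
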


\begin{proof}
i) We use (\ref{conn}) with $\pi $ replaced by $\eta $ and $\lambda $
replaced by $\tau $ and get $\gamma _{n,n-1}\left( \delta ,\alpha \right)
\allowbreak =\allowbreak \eta _{n,n-1}(\delta )\tau _{n,n}\left( \alpha
\right) \allowbreak +\allowbreak \eta _{n,n}\left( \delta \right) \tau
_{n,n-1}\allowbreak =\allowbreak \sum_{k=0}^{n-1}(b_{k}(\alpha
)-b_{k}(\delta ))$ by Proposition \ref{part}, ii)

ii) $\gamma _{n,n-2}(\delta ,\alpha )\allowbreak =\allowbreak \eta
_{n,n-2}(\delta )\tau _{n-2,n-2}\left( \alpha \right) \allowbreak
+\allowbreak \eta _{n,n-1}\left( \delta \right) \tau _{n-1,n-2}\left( \alpha
\right) \allowbreak +\allowbreak \eta _{n,n}\left( \delta \right) \tau
_{n,n-2}\left( \alpha \right) \allowbreak =\allowbreak \eta _{n,n-2}(\delta
)\allowbreak +\allowbreak \tau _{n,n-2}\left( \alpha \right) \allowbreak
+\allowbreak \eta _{n,n-1}\left( \delta \right) \tau _{n-1,n-2}\left( \alpha
\right) $. Now we apply Proposition \ref{part}, iii) and do some algebra.
\end{proof}

\begin{corollary}
\label{con_symm}Let us assume that both distributions $\alpha $ and $\delta $
are symmetric and coefficients of the 3-term recurrences satisfied by monic
polynomials orthogonal with respect to distributions $\alpha $ and $\delta $
are respectively $\left\{ a_{n}\left( \alpha \right) \right\} _{n\geq 0}$
and $\left\{ a_{n}\left( \delta \right) \right\} _{n\geq 0}$ then 
\begin{equation*}
\tilde{p}_{n}\left( x,\delta \right) =\sum_{k=0}^{\left\lfloor
n/2\right\rfloor }\gamma _{n,n-2k}(\delta ,\alpha )\tilde{p}_{n-2k}(x,\alpha
),
\end{equation*}%
where 
\begin{gather*}
\gamma _{n,n-2k}\left( \delta ,\alpha \right) \allowbreak =\allowbreak \\
\sum_{m=0}^{k}(-1)^{m}\left( \sum_{\substack{ 1\leq j_{1}\leq j_{2}\ldots
,j_{m}\leq n-1,  \\ j_{k+1}-j_{k}\geq 2,  \\ k=1,\ldots ,m-1}}%
\prod_{i=1}^{m}a_{j_{1}}^{2}\left( \delta \right) \ldots a_{j_{m}}^{2}\left(
\delta \right) \right) \left( \sum_{i_{1}=1}^{k-m}a\left( \alpha \right)
_{j_{1}}^{2}\ldots \sum_{i_{k-m}=1}^{i_{k-m-1}+1}a\left( \alpha \right)
_{i_{k-m}}^{2}\right) .
\end{gather*}%
In particular 
\begin{eqnarray*}
\gamma _{n,n}\left( \delta ,\alpha \right) \allowbreak &=&\allowbreak 1, \\
\gamma _{n,0}(\delta ,\alpha )\allowbreak &=&\allowbreak \left\{ 
\begin{array}{ccc}
0 & if & n\text{ is odd} \\ 
\chi _{k} & if & n\allowbreak =\allowbreak 2k%
\end{array}%
\right. , \\
\gamma _{n,n-2}\left( \delta ,\alpha \right)
&=&\sum_{k=1}^{n-1}(a_{k}^{2}\left( \alpha \right) -a_{k}^{2}\left( \delta
\right) ),
\end{eqnarray*}%
where 
\begin{gather*}
\chi _{k}\allowbreak =\allowbreak \\
\sum_{m=0}^{k}(-1)^{m}\left( \sum_{\substack{ 1\leq j_{1}\leq j_{2}\ldots
,j_{m}\leq n-1,  \\ j_{k+1}-j_{k}\geq 2,  \\ k=1,\ldots ,m-1}}%
\prod_{i=1}^{m}a\left( \delta \right) _{j_{1}}^{2}\ldots a\left( \delta
\right) _{j_{m}}^{2}\right) \left( \sum_{i_{1}=1}^{k-m}a\left( \alpha
\right) _{j_{1}}^{2}\ldots \sum_{i_{k-m}=1}^{i_{k-m-1}+1}a\left( \alpha
\right) _{i_{k-m}}^{2}\right) .
\end{gather*}
\end{corollary}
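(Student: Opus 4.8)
The plan is to specialize the general connection formula of Lemma~\ref{con-c} to the symmetric case and then substitute the closed forms obtained in Propositions~\ref{aux} and~\ref{part}. First I would use the monic version of Lemma~\ref{con-c}, which reads
\[
\gamma_{n,k}(\delta,\alpha)=\sum_{j=k}^{n}\eta_{n,j}(\delta)\,\tau_{j,k}(\alpha).
\]
Since both $\alpha$ and $\delta$ are symmetric, all the coefficients $b_{n}$ vanish for both measures, so Proposition~\ref{part}~v) (equation~(\ref{sol2})) applies and collapses the monic power coefficients onto a single auxiliary sequence: $\eta_{n,j}(\delta)=\xi_{n,j}^{(1)}(\delta)$ and $\tau_{j,k}(\alpha)=\zeta_{j,k}^{(1)}(\alpha)$.

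Next I would run the parity argument. By (\ref{sol11}) and (\ref{sol21}) both $\xi^{(1)}$ and $\zeta^{(1)}$ vanish whenever the difference of their two indices is odd; hence $\eta_{n,j}(\delta)$ survives only when $n-j$ is even and $\tau_{j,k}(\alpha)$ only when $j-k$ is even, which forces $n-k$ to be even. This is precisely the assertion that $\tilde{p}_{n}(x,\delta)$ expands in the $\tilde{p}_{n-2k}(x,\alpha)$ alone, and it fixes the summation range $0\le k\le\lfloor n/2\rfloor$. Writing the inner index as $j=n-2m$, the formula above becomes
\[
\gamma_{n,n-2k}(\delta,\alpha)=\sum_{m=0}^{k}\eta_{n,n-2m}(\delta)\,\tau_{n-2m,n-2k}(\alpha).
\]

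Then I would insert the explicit expressions. Reading $\eta_{n,n-2m}(\delta)=\xi_{n,n-2m}^{(1)}(\delta)$ from (\ref{sol11}) with the parameter $j=2m$ produces the alternating $(-1)^{m}$ product of $a^{2}(\delta)$ over index sets of size $m$ in $\{1,\ldots,n-1\}$ with consecutive gaps at least $2$, while reading $\tau_{n-2m,n-2k}(\alpha)=\zeta_{n-2m,n-2k}^{(1)}(\alpha)$ from (\ref{sol21}) with the parameter $l=2(k-m)$ produces the $(k-m)$-fold nested sum of $a^{2}(\alpha)$. Collecting the two factors under the sum over $m$ reproduces the displayed double sum for $\gamma_{n,n-2k}$, and in particular $\chi_{k}$ when $n=2k$. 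The three stated special cases then follow by inspection: $\gamma_{n,n}=\eta_{n,n}(\delta)\tau_{n,n}(\alpha)=1$ because both polynomial systems are monic; $\gamma_{n,n-2}$ comes from the $m=0$ and $m=1$ terms and equals $\sum_{k=1}^{n-1}(a_{k}^{2}(\alpha)-a_{k}^{2}(\delta))$, consistent with Corollary~\ref{f_few}~ii) specialized to $b\equiv 0$; and $\gamma_{n,0}$ is nonzero only for even $n=2k$.

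The only real difficulty I foresee is bookkeeping: after the substitution $j=n-2m$ one must align the nested-summation bounds of (\ref{sol21}) with the gap-constrained products of (\ref{sol11}) and attach the correct measure to each factor. In particular the factor carrying the index gaps originates from $\eta(\delta)$ while the nested sums originate from $\tau(\alpha)$, so some care is needed in transcribing which of $\alpha$ and $\delta$ decorates which factor in the final display. No new analytic ingredient beyond Propositions~\ref{aux} and~\ref{part} is required.
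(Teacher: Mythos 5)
Your proposal is correct and is essentially the intended argument: the paper states this corollary without a separate proof, and it follows exactly as you describe from the monic version of Lemma \ref{con-c}, the collapse $\eta=\xi^{(1)}$, $\tau=\zeta^{(1)}$ of Proposition \ref{part} v) when all $b_{n}$ vanish, the parity vanishing forced by (\ref{sol11}) and (\ref{sol21}), and the substitution $j=n-2m$. Your closing caution about which measure decorates which factor is not only warranted but resolves in your favor: since $\eta_{n,n-2m}$ carries $\delta$ and $\tau_{n-2m,n-2k}$ carries $\alpha$, the $(-1)^{m}$ gap-constrained products must involve $a^{2}(\delta)$ and the nested sums $a^{2}(\alpha)$ --- which is what the stated special case $\gamma_{n,n-2}(\delta,\alpha)=\sum_{k=1}^{n-1}(a_{k}^{2}(\alpha)-a_{k}^{2}(\delta))$ and Corollary \ref{f_few} require --- so the paper's printed general display, which transposes $\alpha$ and $\delta$, contains a typo and your version is the correct one.
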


\begin{remark}
It turns out that pairs of systems of orthogonal polynomials with the
property that the connection coefficients between them are nonnegative are
important. Basing on Corollaries \ref{f_few} and \ref{con_symm} we see that
a necessary conditions for coefficients $\gamma _{n,j}\left( \delta ,\alpha
\right) $ to be nonnegative is that $\forall n\geq
0:\sum_{j=0}^{n}b_{j}\left( \alpha \right) \geq \sum_{j=0}^{n}b_{j}\left(
\alpha )\right) .$ If the measures that orthogonalize those systems of
polynomials are such that $\forall n\geq 0:b_{n}\left( \delta \right)
\allowbreak =\allowbreak b_{n}\left( \alpha \right) $ then the necessary
condition for the coefficients $\gamma _{n,j}\left( \delta ,\alpha \right) $
to be nonnegative is $\forall n\geq 0:\sum_{j=1}^{n}a_{j}^{2}\left( \alpha
\right) \geq \sum_{j=1}^{n}a_{j}^{2}\left( \delta \right) .$ The discussion
why the non-negativity of connection coefficients is important and what are
the consequences of this fact is given in \cite{Szwarc92}.
\end{remark}

Following slight modification (ratio of densities is substituted by the
Radon--Nikodym derivative of respective measures) of Proposition 1 iii) of 
\cite{Szab10} (see also \cite{Szab13}) we deduce the following general
statement concerning :

\begin{corollary}
If $\frac{d\alpha }{d\delta }(x)\allowbreak =\allowbreak 1/Q_{r}(x)$ where $%
Q_{r}$ is a polynomial of order $r$ (positive on $\limfunc{supp}\delta )$
then for $N\geq r+1$ the symmetric matrix 
\begin{equation*}
\mathbf{L}_{N}^{-1}(\alpha )\mathbf{M}_{N}(\delta )\left( \mathbf{L}%
_{N}^{-1}(\alpha )\right) ^{T}
\end{equation*}
is a '$r-$ribbon' matrix i.e. its $(i,j)$ entries such that $\left\vert
i-j\right\vert >r$ are zeros.
\end{corollary}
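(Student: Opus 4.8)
The plan is to identify the $(i,j)$ entry of the matrix in question with the $\delta$-inner product $\int p_{i}(x,\alpha)p_{j}(x,\alpha)\,d\delta(x)$, and then to exploit orthogonality with respect to $\alpha$ together with the factorization $d\delta=Q_{r}\,d\alpha$. The whole statement then reduces to a degree count.

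First I would rewrite the matrix using Proposition \ref{interpretacja} i), which gives $\mathbf{L}_{N}^{-1}(\alpha)=\mathbf{\Pi}_{N}(\alpha)$. Combining this with $\mathbf{M}_{N}(\delta)=E_{\delta}\mathbf{X}_{N}\mathbf{X}_{N}^{T}$ and with $\mathbf{P}_{N}(x,\alpha)=\mathbf{\Pi}_{N}(\alpha)\mathbf{X}_{N}$ from (\ref{_p}), we obtain
\begin{equation*}
\mathbf{L}_{N}^{-1}(\alpha)\mathbf{M}_{N}(\delta)\left(\mathbf{L}_{N}^{-1}(\alpha)\right)^{T}=\mathbf{\Pi}_{N}(\alpha)E_{\delta}[\mathbf{X}_{N}\mathbf{X}_{N}^{T}]\mathbf{\Pi}_{N}^{T}(\alpha)=E_{\delta}\left[\mathbf{P}_{N}(x,\alpha)\mathbf{P}_{N}^{T}(x,\alpha)\right].
\end{equation*}
Hence the $(i,j)$ entry equals $\int p_{i}(x,\alpha)p_{j}(x,\alpha)\,d\delta(x)$, and the symmetry of the matrix is manifest from this expression.

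Next, using the hypothesis $d\alpha/d\delta=1/Q_{r}$, equivalently $d\delta=Q_{r}\,d\alpha$, I would write the $(i,j)$ entry as $\int p_{i}(x,\alpha)p_{j}(x,\alpha)Q_{r}(x)\,d\alpha(x)$. Assume without loss of generality $i\leq j$. The product $p_{i}(x,\alpha)Q_{r}(x)$ is a polynomial of degree at most $i+r$. If $\left\vert i-j\right\vert=j-i>r$, that is $i+r<j$, then $p_{i}Q_{r}$ has degree strictly less than $j$; since $p_{j}(\cdot,\alpha)$ is orthogonal with respect to $\alpha$ to every polynomial of degree less than $j$, the integral vanishes. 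This kills all entries with $\left\vert i-j\right\vert>r$, which is exactly the $r$-ribbon property, and the assumption $N\geq r+1$ merely guarantees that off-ribbon entries actually exist.

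The argument is essentially a single degree count once the entry is recognized as the inner product, so I do not anticipate a serious obstacle. The only points requiring care are bookkeeping: confirming that positivity of $Q_{r}$ on $\limfunc{supp}\delta$ makes $\alpha$ and $\delta$ mutually absolutely continuous so that the family $\{p_{k}(\cdot,\alpha)\}$ is well defined and the substitution $d\delta=Q_{r}\,d\alpha$ is legitimate, and noting that the truncation index $N$ never interferes, since the vanishing uses orthogonality of $p_{j}$ against lower-degree polynomials rather than any expansion in the finite basis.
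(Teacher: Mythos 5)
Your proof is correct, but it takes a genuinely different route from the paper's. The paper never computes the entries: it first invokes (a slight modification of) Proposition 1 iii) of \cite{Szab10} to conclude that the lower triangular connection matrix $\mathbf{L}_{N}^{-1}(\alpha )\mathbf{L}_{N}(\delta )$ is itself an $r$-ribbon matrix, then uses the Cholesky factorization $\mathbf{M}_{N}(\delta )=\mathbf{L}_{N}(\delta )\mathbf{L}_{N}^{T}(\delta )$ to write the target matrix as $\bigl(\mathbf{L}_{N}^{-1}(\alpha )\mathbf{L}_{N}(\delta )\bigr)\bigl(\mathbf{L}_{N}^{-1}(\alpha )\mathbf{L}_{N}(\delta )\bigr)^{T}$, and finishes with the purely algebraic fact that $\mathbf{A}\mathbf{A}^{T}$ is $r$-ribbon whenever $\mathbf{A}$ is lower triangular and $r$-ribbon. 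You instead identify the $(i,j)$ entry directly as $\int p_{i}(x,\alpha )p_{j}(x,\alpha )\,d\delta (x)=\int p_{i}p_{j}Q_{r}\,d\alpha$ and annihilate it for $\left\vert i-j\right\vert >r$ by a degree count against the $\alpha$-orthogonality of $p_{j}$, which is valid: for $i\leq j$ and $j-i>r$ the polynomial $p_{i}Q_{r}$ has degree at most $i+r<j$. Your argument is more elementary and self-contained, replacing the external citation by a two-line computation; moreover the same degree count applied to $\int p_{i}(x,\alpha )p_{j}(x,\delta )\,Q_{r}\,d\alpha$ would reprove the cited ribbon property of the triangular connection matrix itself. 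What the paper's route buys is exactly that stronger intermediate fact (the \emph{triangular} connection matrix is ribbon, not merely its symmetrization) together with economy, since that result was already on hand; what yours buys is independence from \cite{Szab10} and a formula from which the symmetry of the matrix is manifest. Your side remarks are handled correctly too: positivity of $Q_{r}$ on the support of $\delta$ legitimizes $d\delta =Q_{r}\,d\alpha$, and $N\geq r+1$ merely guarantees that entries with $\left\vert i-j\right\vert >r$ exist.
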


\begin{proof}
Using the above mentioned Proposition we deduce that the lower triangular
matrix $\mathbf{L}_{N}^{-1}(\alpha )\mathbf{L}_{N}(\delta )$ is a '$r-$%
ribbon' matrix. We have $\mathbf{L}_{N}^{-1}(\alpha )\mathbf{L}_{N}(\delta )(%
\mathbf{L}_{N}^{-1}(\alpha )\mathbf{L}_{N}(\delta ))^{T}\allowbreak
=\allowbreak \mathbf{L}_{N}^{-1}(\alpha )\mathbf{M}_{N}(\delta )\left( 
\mathbf{L}_{N}^{-1}(\alpha )\right) ^{T}$ . Then we use the fact that if $%
\mathbf{A}$ is a lower triangular '$r-$ribbon' matrix then $\mathbf{AA}^{T}$
is also a '$r-$ribbon' matrix.
\end{proof}

As a more interesting consequence of Lemma \ref{con-c} we have an important
expansion of the Radon--Nikodym derivative of two measures $\alpha <<\delta
. $

\begin{theorem}
\label{expansion}Let the two measures $\alpha $ and $\delta $ both having
all moments be such that $\alpha <<\delta $ and $\int (\frac{d\alpha }{%
d\delta }(x))^{2}d\delta (x)\allowbreak <\allowbreak \infty ,$ where $\frac{%
d\alpha }{d\delta }(x)$ denotes their Radon--Nikodym derivative. Then%
\begin{equation}
\frac{d\alpha }{d\delta }(x)\allowbreak =\allowbreak \sum_{j=0}^{\infty }%
\mathbb{E}_{\alpha }p_{j}(Z,\delta )p_{j}(x,\delta ),  \label{rozkl}
\end{equation}%
in $L_{2}(\limfunc{supp}\delta ,\mathcal{F},d\delta ),$ where $\mathcal{F}$
denotes Borel sigma field of $\limfunc{supp}\delta .$ In particular we have
(Parseval's formula) 
\begin{equation}
\int (\frac{d\alpha }{d\delta }(x))^{2}d\delta (x)=\sum_{j\geq 0}(\mathbb{E}%
_{\alpha }p_{j}(Z,\delta ))^{2}.  \label{bessel}
\end{equation}%
Additionally when $\sum_{j\geq 0}(\mathbb{E}_{a}p_{j}(Z,\delta ))^{2}\ln
(j+1)^{2}\allowbreak <\allowbreak \infty ,$ we have $\delta $ almost
everywhere convergence.
\end{theorem}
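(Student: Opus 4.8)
The plan is to read \eqref{rozkl} as the ordinary Fourier expansion of $f:=d\alpha/d\delta$ in the orthonormal system $\{p_{j}(\cdot,\delta)\}_{j\geq 0}$ inside the Hilbert space $L_{2}(\limfunc{supp}\delta,\mathcal{F},d\delta)$, the genuine content being that this system is \emph{complete}. First I would invoke the standing determinacy assumption on $\delta$: for a determinate moment problem the polynomials are dense in $L_{2}(d\delta)$ (a classical theorem of M.\ Riesz), so $\{p_{j}(\cdot,\delta)\}_{j\geq 0}$ is a complete orthonormal basis. I expect this to be the crux of the whole argument, since without completeness one obtains only Bessel's inequality and the series converges to the orthogonal projection of $f$ onto the closed polynomial span rather than to $f$ itself; the hypothesis $\int(d\alpha/d\delta)^{2}\,d\delta<\infty$ is precisely what places $f$ in the space where this basis can reach it.

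Granting completeness, the Riesz--Fischer theorem yields $f=\sum_{j\geq 0}c_{j}\,p_{j}(\cdot,\delta)$ with convergence in $L_{2}(d\delta)$, where $c_{j}=\int f(x)p_{j}(x,\delta)\,d\delta(x)$; these coefficients are well defined because Cauchy--Schwarz gives $\|fp_{j}\|_{L_{1}(d\delta)}\leq\|f\|_{L_{2}(d\delta)}\|p_{j}\|_{L_{2}(d\delta)}<\infty$. The second step is to identify the coefficients. Since $\alpha\ll\delta$ with $d\alpha=f\,d\delta$, and since $\alpha$ has all moments so that every polynomial lies in $L_{1}(d\alpha)$, the defining property of the Radon--Nikodym derivative gives
\begin{equation*}
c_{j}=\int p_{j}(x,\delta)f(x)\,d\delta(x)=\int p_{j}(x,\delta)\,d\alpha(x)=E_{\alpha}p_{j}(Z,\delta),
\end{equation*}
which is exactly \eqref{rozkl}. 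It is worth noting that expanding $p_{j}(\cdot,\delta)$ via Lemma \ref{con-c} and using orthogonality of $\{p_{k}(\cdot,\alpha)\}$ against $p_{0}\equiv 1$ shows $c_{j}=\gamma_{j,0}(\delta,\alpha)$, tying these Fourier coefficients back to the connection coefficients, which is why the statement is framed as a consequence of Lemma \ref{con-c}. Parseval's identity \eqref{bessel} is then immediate from completeness, namely $\|f\|_{L_{2}(d\delta)}^{2}=\sum_{j\geq 0}c_{j}^{2}$.

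For the final, almost-everywhere statement I would appeal to the Rademacher--Menshov theorem: for \emph{any} orthonormal system in $L_{2}(d\delta)$ the condition $\sum_{j}c_{j}^{2}\ln^{2}(j+1)<\infty$ forces the orthogonal series $\sum_{j}c_{j}\,p_{j}(\cdot,\delta)$ to converge $d\delta$-almost everywhere. Since the $L_{2}$ limit has already been pinned down as $f$ in the previous step, the pointwise limit must agree with $f$ for $\delta$-almost every $x$. Thus the work is essentially conceptual rather than computational: the two substantive ingredients are completeness of the polynomials (supplied by the determinacy assumption) and the correct a.e.\ convergence theorem for orthogonal expansions, with the identification of the coefficients being a one-line application of the Radon--Nikodym relation.
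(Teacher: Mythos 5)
Your proposal is correct and follows essentially the same route as the paper's own proof: expand $d\alpha/d\delta$ in the orthonormal system $\{p_{j}(\cdot,\delta)\}$, identify the coefficients via the Radon--Nikodym relation $\int p_{j}(x,\delta)\,d\alpha(x)=E_{\alpha}p_{j}(Z,\delta)$, read off Parseval, and invoke Rademacher--Menshov for the almost-everywhere statement. The only difference is that you explicitly justify completeness of the polynomial system via determinacy (M.~Riesz), a point the paper leaves implicit under its standing assumption that the moment problem is determinate --- a worthwhile clarification, since without it the series would only converge to the projection of $d\alpha/d\delta$ onto the closed polynomial span.
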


\begin{proof}
Although the idea of this simple in fact theorem appeared in \cite{Szab10}
where also its numerous nontrivial applications were presented we will give
its simple proof for completeness of the paper. \newline
Radon--Nikodym derivative $\frac{d\alpha }{d\delta }(x)$ is square
integrable with respect to the measure $\delta $ i.e. hence it can be
expanded in a Fourier series with respect to the system of orthogonal
polynomials $\left\{ p_{j}\left( x,\delta \right) \right\} _{j\geq 0}$%
\begin{equation*}
\frac{d\alpha }{d\delta }(x)=\sum_{j\geq 0}\omega _{j}p_{j}\left( x,\delta
\right) .
\end{equation*}%
Now let us multiply both sides of this expansion by $p_{k}\left( x,\delta
\right) $ and integrate with respect to $\delta \left( dx\right) .$ On the
right hand side we will get $\omega _{k}$ while on the left hand side $\int
p_{k}\left( x,\delta \right) \alpha \left( dx\right) \allowbreak
=\allowbreak \mathbb{E}_{\alpha }p_{k}\left( Z,\delta \right) .$ (\ref%
{bessel}) follows Bessel equality of orthogonal series. If $\sum_{j\geq 0}(%
\mathbb{E}_{a}p_{j}(Z,\delta ))^{2}\ln (j+1)^{2}\allowbreak <\allowbreak
\infty $ then we apply Rademacher--Menshov Theorem (see e.g. \cite{Alexits})
and get almost everywhere convergence.
\end{proof}

\begin{remark}
Let us notice that if we write $p_{n}(x,\delta )\allowbreak =\allowbreak
\sum_{i=0}^{n}\gamma _{n,i}(\delta ,\alpha )p_{i}(x,\alpha ),$ then $\gamma
_{n,0}(\delta ,\alpha )\allowbreak =\allowbreak \mathbb{E}_{\alpha
}p_{n}(Z,\delta )$ after integrating both sides with respect to $\alpha
\left( dx\right) .$
\end{remark}

\begin{example}
As a corollary we will get the famous Poisson--Mehler expansion formula ((%
\ref{P-M}), below). In order not to repeat too many known details we refer
the reader to \cite{Szab10}, \cite{SzablAW} as far as the ideas and
calculations are concerned and to \cite{IA} in order to get more properties
of the mentioned below families of orthogonal polynomials.

Namely we will consider the so called $q-$Hermite polynomials defined for $%
\left\vert q\right\vert <1$ as $H_{n}(x|q)/\sqrt{[n]_{q}!},$ where $%
H_{n}(x|q)$ are monic polynomials satisfying 3-term recurrence given by
(2.3) of \cite{SzablAW}.

We used here traditional notation common in the so called $q-$series theory: 
$[n]_{q}\allowbreak =\allowbreak (1-q^{n})/(1-q),$ for $\left\vert
q\right\vert <1$ and $[n]_{1}\allowbreak =\allowbreak n,$ $%
[n]_{q}!\allowbreak =\allowbreak \prod_{j=1}^{n}[j]_{q},$ with $[0]_{q}!=1$ $%
(a)_{n}\allowbreak =\allowbreak \prod_{i=0}^{n-1}(1-aq^{i}),$ (the so called 
$q-$Pochhammer symbol).

One can consider also the case $q\allowbreak =\allowbreak 1$ obtaining
similar results but for the sake of simplicity let us consider only the case 
$\left\vert q\right\vert <1.$ Let us mention only that for $q\allowbreak
=\allowbreak 1,$ $q-$Hermite polynomials are in fact equal to the classical
Hermite polynomials, more precisely the ones that are orthogonal with
respect to measure with the density $\exp \left( -x^{2}/2\right) /\sqrt{2\pi 
}.$

It is known that $q-$Hermite polynomials are orthogonal for $\left\vert
q\right\vert <1,$ $x\in S\left( q\right) \allowbreak =\{x\in \mathbb{R}%
:\left\vert x\right\vert \leq 2/\sqrt{1-q}\}\allowbreak $ with respect to
the measure with the density $f_{N}(x|q)$ whose exact formula is not very
important and which is given e.g. in \cite{SzablAW} (formula (2.10)). The
measure with the density $f_{N}(x|q)$ is our measure $\delta .$ It is also
known (see same references) that the measure with the density $:$%
\begin{equation*}
f_{CN}\left( x|y,\rho ,q\right) =f_{N}\left( x|q\right) \prod_{k=0}^{\infty }%
\frac{(1-\rho ^{2}q^{k})}{w_{k}\left( x,y|\rho ,q\right) },
\end{equation*}%
where 
\begin{equation*}
w_{k}\left( x,y|\rho ,q\right) =(1-\rho ^{2}q^{2k})^{2}-(1-q)\rho
q^{k}(1+\rho ^{2}q^{2k})xy+(1-q)\rho ^{2}(x^{2}+y^{2})q^{2k},
\end{equation*}%
for $x,y\in S(q),$ $\left\vert \rho \right\vert <1$ for $\left\vert
q\right\vert <1$ has orthonormal polynomials equal to the so called
Al-Salam--Chihara polynomials $P_{n}(x|y,\rho ,q)$ satisfying the 3-term
recurrence given by formula (2.6) of \cite{SzablAW} divided by $\sqrt{(\rho
^{2})_{n}[n]_{q}!}$ as it follows from Proposition 1,iii) of \cite{SzablAW}
(to get orthonormality).

Measure with density the $f_{CN}$ it is our measure $\alpha .$ Following
formula (4.7) in \cite{IRS99} we deduce that 
\begin{equation*}
\mathbb{E}_{\alpha }H_{n}\left( Z|q\right) \allowbreak =\allowbreak \rho
^{n}H_{n}(y|q).
\end{equation*}%
where $y\in S(q)$ and $\left\vert \rho \right\vert <1$ are some parameters.
Details are in \cite{SzablAW} but the can be traced to earlier works of
Bryc, Matysiak, Szab\l owski \cite{bms}. 
\begin{equation*}
\frac{d\alpha }{d\delta }\left( x\right) \allowbreak =\allowbreak
\prod_{k=0}^{\infty }\frac{(1-\rho ^{2}q^{k})}{w_{k}\left( x,y|\rho
,q\right) }I_{S\left( q\right) }\left( x\right) .
\end{equation*}%
Notice also that this function is bounded from above and as such square
integrable with respect to any finite measure on $S(q).$ Again details of
the proof of this simple fact are in \cite{SzablAW}. Now following (\ref%
{rozkl}) we get:%
\begin{equation}
\prod_{k=0}^{\infty }\frac{(1-\rho ^{2}q^{k})}{w_{k}\left( x,y|\rho
,q\right) }=\sum_{j\geq 0}\frac{\rho ^{j}}{[j]_{q}!}H_{j}(x|q)H_{j}(y|q),
\label{P-M}
\end{equation}%
for every $y\in S\left( q\right) $ and almost all $x\in S\left( q\right) .$
Notice that for $q=1$ (\ref{P-M}) is also true but it requires some more
properties of Hermite polynomials.
\end{example}

\begin{remark}
Situation described above is an illustration of the situation often met in
the theory of Markov processes. Namely suppose that we have process $\mathbf{%
X}\allowbreak =\allowbreak \{X_{t}:t\in T\},$ where $T$ is some ordered set
of infinite cardinality and $\forall t\in T:$ $X_{t}$ is a random variable
with support of infinite cardinality. Suppose $dP_{t}$ is the distribution
of $X_{t}$ and that $\mathbb{E}_{t}\left\vert X_{t}\right\vert ^{n}$ is
finite for all $t$ and $n.$ Suppose also that $\left\{ p_{n}^{(t)}\right\} $
are polynomials orthogonal with respect to $dP_{t}.$ Further suppose that
the conditional distribution of $X_{s}$ given $X_{t}\allowbreak =\allowbreak
x$ for $s>t$ i.e. $dC_{s,t}$ is absolutely continuous with respect to $%
dP_{s} $ and that $\frac{dC_{s,t}}{dP_{s}}(x)$ is square integrable with
respect to $dP_{s}$ for every $s>t$ and $y\in \limfunc{supp}X_{t}.$ Then as
it follows from Theorem \ref{expansion} in $L_{2}(\limfunc{supp}X_{s},%
\mathcal{F},dP_{s})$ we have:%
\begin{equation*}
dC_{s,t}\allowbreak =\allowbreak (\sum_{j\geq 0}\mathbb{E}%
_{s,t}p_{j}^{(s)}(X_{s})p_{j}^{(s)}(x))dP_{s},
\end{equation*}%
where as usually in the theory of Markov processes $\mathbb{E}%
_{s,t}(p_{j}^{(s)}(X_{s}))$ denotes expectation with respect to distribution 
$C_{s,t}$ i.e. it denotes conditional expectation of $p_{j}^{(s)}(X_{s})$
given $X_{t}\allowbreak =\allowbreak x.$ In other words we get expansion of
the transfer function of our process.
\end{remark}

\section{Linearization coefficients\label{Line}}

Notice that Propositions \ref{Cholesky} and \ref{interpretacja} allow us to
formulate an algorithm to get so called 'linearization coefficients'. Let us
recall that linearization formula is popular name for the expansions of the
form%
\begin{equation*}
p_{n}\left( x\right) p_{m}\left( x\right) \allowbreak =\allowbreak
\sum_{j=0}^{m+n}c_{n,m,j}p_{j}(x).
\end{equation*}%
The problem is to find coefficients $c_{n,m,j}$ for all $n,$ $m\geq 1$. We
have the following lemma.

\begin{lemma}
\label{Lin_c}For $\forall n,m\geq 0$ and $s=0,\ldots ,m+n$ 
\begin{equation*}
c_{n,m,s}=\left( \sum_{\substack{ 0\leq j\leq n,  \\ 0\leq k\leq m,j+k\geq s 
}}\pi _{n,j}\pi _{m,k}\lambda _{j+k,s}\right) .
\end{equation*}
\end{lemma}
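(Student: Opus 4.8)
The plan is to express the product $p_n(x)p_m(x)$ in the monomial basis and then re-expand the resulting monomials in the orthonormal polynomials $\{p_j\}$, reading off the coefficient of $p_s$. Everything reduces to the two change-of-basis relations already established: equation (\ref{p_o}), which writes $p_n(x)=\sum_{j=0}^n \pi_{n,j}x^j$, and equation (\ref{inv_p}), which writes $x^i=\sum_{\ell=0}^i \lambda_{i,\ell}p_\ell(x)$.

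First I would multiply the two expansions from (\ref{p_o}):
\begin{equation*}
p_n(x)p_m(x)=\left(\sum_{j=0}^n \pi_{n,j}x^j\right)\left(\sum_{k=0}^m \pi_{m,k}x^k\right)=\sum_{j=0}^n\sum_{k=0}^m \pi_{n,j}\pi_{m,k}\,x^{j+k}.
\end{equation*}
Next I would substitute the inverse expansion (\ref{inv_p}) for each monomial $x^{j+k}=\sum_{s=0}^{j+k}\lambda_{j+k,s}p_s(x)$, obtaining
\begin{equation*}
p_n(x)p_m(x)=\sum_{j=0}^n\sum_{k=0}^m \pi_{n,j}\pi_{m,k}\sum_{s=0}^{j+k}\lambda_{j+k,s}\,p_s(x).
\end{equation*}
Comparing with the definition $p_n(x)p_m(x)=\sum_{s=0}^{m+n}c_{n,m,s}p_s(x)$ and collecting the coefficient of $p_s(x)$ gives
\begin{equation*}
c_{n,m,s}=\sum_{\substack{0\le j\le n,\ 0\le k\le m\\ j+k\ge s}}\pi_{n,j}\pi_{m,k}\lambda_{j+k,s},
\end{equation*}
which is exactly the claimed formula, since the inner sum over $s$ contributes $\lambda_{j+k,s}$ precisely when $s\le j+k$.

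The only point requiring a little care is the bookkeeping of the summation ranges: the constraint $s\le j+k$ from the upper limit in (\ref{inv_p}) must be folded into the index set, yielding the condition $j+k\ge s$ that appears in the statement, while the bounds $0\le j\le n$ and $0\le k\le m$ come directly from the degrees of $p_n$ and $p_m$. There is no genuine obstacle here; the result is a pure rearrangement of finite sums, and the interchange of summation order is automatically justified because all sums are finite. In matrix language this is simply the statement that the coefficient array is obtained by composing $\mathbf{\Pi}_n$ (mapping polynomials to monomials) with $\mathbf{\Lambda}$ (mapping monomials back to polynomials), consistent with (\ref{_x}).
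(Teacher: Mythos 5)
Your proof is correct and is essentially the same as the paper's: the paper simply phrases the identical computation in matrix form, reading $p_n(x)p_m(x)$ as the $(n,m)$ entry of $\mathbf{\Pi }_{N}\mathbf{X}_{N}\mathbf{X}_{N}^{T}\mathbf{\Pi }_{N}^{T}$ and then expanding $x^{j+k}$ via the coefficients $\lambda _{j+k,s}$, exactly as you do directly with (\ref{p_o}) and (\ref{inv_p}). Your closing remark about composing $\mathbf{\Pi }$ with $\mathbf{\Lambda }$ is precisely the paper's formulation, so there is no substantive difference.
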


\begin{proof}
For $N\geq \max (m,n)$ we have: 
\begin{gather*}
p_{n}(x)p_{m}(x)\allowbreak =\allowbreak (\mathbf{P}_{N}(x)\mathbf{P}%
_{N}^{T}(x))_{n,m}=(\mathbf{\Pi }_{N}\mathbf{X}_{N}\mathbf{X}_{N}^{T}\mathbf{%
\Pi }_{N}^{T})_{n,m} \\
\sum_{j,k=0}^{N}\left( \mathbf{\Pi }_{N}\right) _{n,j}\left( \mathbf{X}_{N}%
\mathbf{X}_{N}^{T}\right) _{j,k}\left( \mathbf{\Pi }_{N}^{T}\right)
_{k,m}=\sum_{j,k=0}^{N}\pi _{n,j}x^{j+k}\pi _{m,k}= \\
\sum_{s=0}^{2N}p_{s}(x)\left( \sum_{j,k=0}^{N}\pi _{n,j}\pi _{m,k}\lambda
_{j+k,s}\right) .
\end{gather*}%
We now use the fact that $\pi _{n,j}\allowbreak =\allowbreak 0$ for $n<j$
and $\lambda _{k,j}\allowbreak =\allowbreak 0$ for $k<j.$
\end{proof}

\begin{remark}
Following general properties of orthogonal polynomials we deduce that $%
\forall k<|n-m|:\left( \sum_{\substack{ 0\leq i\leq n,  \\ 0\leq j\leq
m,j+i\geq k}}\pi _{n,i}\pi _{m,j}\lambda _{i+j,k}\right) \allowbreak
=\allowbreak 0.$ More precisely $c_{n,m,s}\allowbreak =\allowbreak 0$ for $%
s=0,\ldots ,\left\vert n-m\right\vert -1.$
\end{remark}

\begin{remark}
By Proposition \ref{3t-r} we deduce that for monic versions of polynomials $%
p_{n}$ we have similar formula. More precisely let $\tilde{p}_{n}(x)$ be the
monic version of polynomial $p_{n}(x)$ then 
\begin{equation*}
\tilde{p}_{n}(x)\tilde{p}_{n}\left( x\right) \allowbreak =\allowbreak
\sum_{s=0}^{n+m}\tilde{c}_{n,m,s}\tilde{p}_{s}(x),
\end{equation*}%
where 
\begin{equation}
\tilde{c}_{n,m,s}\allowbreak =\allowbreak \left( \sum_{\substack{ 0\leq
j\leq n,  \\ 0\leq k\leq m,j+k\geq s}}\eta _{n,j}\eta _{m,k}\tau
_{j+k,s}\right) .  \label{lin}
\end{equation}
This is so since $\left( \prod_{j=1}^{n}a_{j}\right) \pi _{n,k}\left(
\prod_{j=1}^{m}a_{j}\right) \pi _{m,l}\frac{1}{\prod_{j=1}^{s}a_{j}}\lambda
_{l+k,s}\allowbreak =\allowbreak \eta _{n,k}\eta _{m,l}\tau _{k+l,s}.$
\end{remark}

\begin{corollary}
We have i) 
\begin{equation*}
c_{n,m,m+m-1}\allowbreak =\allowbreak \sum_{j=\max
(n,m)}^{n+m-1}(b_{j}-b_{j-\max (n,m)}),
\end{equation*}

ii) 
\begin{gather*}
c_{n,m,n+m-2}\allowbreak =\allowbreak \sum_{j=\max
(m,n)}^{m+n-1}a_{j}^{2}\allowbreak -\allowbreak \sum_{j=1}^{\min
\{n,m)-1}a_{j}^{2}\allowbreak -\allowbreak \frac{1}{2}(\sum_{j=\max
(n,m)}^{m+n-2}b_{j}\allowbreak \\
-\allowbreak \sum_{j=0}^{\min (n,m)-1}b_{j})^{2}\allowbreak -\allowbreak 
\frac{1}{2}(\sum_{j=\max (n,m)}^{m+n-2}b_{j}^{2}\allowbreak -\allowbreak
\sum_{j=0}^{\min (n,m)-1}b_{j}^{2}).
\end{gather*}
\end{corollary}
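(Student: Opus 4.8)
The plan is to read everything off the monic linearization formula (\ref{lin}), i.e. to compute the coefficients via $\tilde c_{n,m,s}=\sum_{0\le j\le n,\,0\le k\le m,\,j+k\ge s}\eta_{n,j}\eta_{m,k}\tau_{j+k,s}$. The key observation is that when $s$ is close to the top degree $n+m$, the three constraints $j\le n$, $k\le m$ and $j+k\ge s$ admit only a handful of pairs $(j,k)$, so the sum collapses. The surviving $\eta$'s and $\tau$'s are precisely the near-diagonal coefficients whose closed forms are recorded in Proposition \ref{part}, ii)--iii) and, through them, in the explicit solutions (\ref{sol11})--(\ref{sol22}) of Proposition \ref{aux}. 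So after isolating the admissible pairs I would simply substitute and simplify. Without loss of generality I assume $n\ge m$, so $\max(n,m)=n$ and $\min(n,m)=m$.

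For $s=n+m-1$ the only admissible pairs are $(n,m)$, $(n-1,m)$, $(n,m-1)$, whence $\tilde c_{n,m,n+m-1}=\tau_{n+m,n+m-1}+\eta_{n,n-1}+\eta_{m,m-1}$. By Proposition \ref{part}, ii) we have $\tau_{n+m,n+m-1}=-\eta_{n+m,n+m-1}$, and by the same item together with (\ref{sol12}) in the case $j=1$ one has $\eta_{r,r-1}=\xi_{r,r-1}^{(2)}=-\sum_{k=0}^{r-1}b_k$. Substituting and telescoping the sums of $b_k$ gives $\sum_{k=n}^{n+m-1}b_k-\sum_{k=0}^{m-1}b_k$; reindexing the second sum by $j=k+n$ yields the stated $\sum_{j=\max(n,m)}^{n+m-1}(b_j-b_{j-\max(n,m)})$.

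For $s=n+m-2$ the admissible pairs are $(n,m),(n-1,m),(n,m-1),(n-2,m),(n,m-2),(n-1,m-1)$, so (\ref{lin}) becomes $\tilde c_{n,m,n+m-2}=\tau_{n+m,n+m-2}+(\eta_{n,n-1}+\eta_{m,m-1})\tau_{n+m-1,n+m-2}+\eta_{n,n-2}+\eta_{m,m-2}+\eta_{n,n-1}\eta_{m,m-1}$. Here I would substitute $\tau_{n+m,n+m-2}=\zeta_{n+m,n+m-2}^{(1)}+\zeta_{n+m,n+m-2}^{(2)}$ and $\eta_{r,r-2}=\xi_{r,r-2}^{(1)}+\xi_{r,r-2}^{(2)}$ from Proposition \ref{part}, iii), read off the individual pieces from (\ref{sol11}), (\ref{sol12}), (\ref{sol21}), (\ref{sol22}), and use $\tau_{n+m-1,n+m-2}=-\eta_{n+m-1,n+m-2}=\sum_{k=0}^{n+m-2}b_k$ exactly as in part i). The purely quadratic-in-$a$ part then assembles from $\zeta^{(1)}$ and the two $\xi^{(1)}$ contributions into $\sum_{j=1}^{n+m-1}a_j^2-\sum_{j=1}^{n-1}a_j^2-\sum_{j=1}^{m-1}a_j^2=\sum_{j=\max(n,m)}^{n+m-1}a_j^2-\sum_{j=1}^{\min(n,m)-1}a_j^2$, which is the first two terms of the claim.

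The main obstacle is the bookkeeping of the terms quadratic in the $b_k$. Each second-order $\xi^{(2)}$, $\zeta^{(2)}$ is an ordered double sum, and I would flatten these with the identity $\sum_{0\le k_1<k_2\le M}b_{k_1}b_{k_2}=\frac{1}{2}\left[\left(\sum_{k=0}^{M}b_k\right)^2-\sum_{k=0}^{M}b_k^2\right]$ (and its $k_1\le k_2$ variant for $\zeta^{(2)}$). After this, the cross term $(\eta_{n,n-1}+\eta_{m,m-1})\tau_{n+m-1,n+m-2}$ and the product $\eta_{n,n-1}\eta_{m,m-1}$ must be combined with the flattened diagonal double sums so that everything reorganizes into the single square $-\frac{1}{2}\left(\sum_{j=\max(n,m)}^{m+n-2}b_j-\sum_{j=0}^{\min(n,m)-1}b_j\right)^2$ plus the sum-of-squares correction $-\frac{1}{2}\left(\sum_{j=\max(n,m)}^{m+n-2}b_j^2-\sum_{j=0}^{\min(n,m)-1}b_j^2\right)$; matching ranges of summation is the delicate part. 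As a sanity check I would run the symmetric case $b_k\equiv 0$ first: there every $b$-contribution drops and one is left with exactly $\sum_{j=\max(n,m)}^{n+m-1}a_j^2-\sum_{j=1}^{\min(n,m)-1}a_j^2$, confirming the $a^2$-part before attempting the heavier $b$-algebra.
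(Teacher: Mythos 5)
Your route is exactly the paper's: read everything off the monic formula (\ref{lin}), collapse the sum to the few pairs $(j,k)$ admissible near the top degree, and substitute the near-diagonal values of $\eta$ and $\tau$ from Proposition \ref{part} and (\ref{sol11})--(\ref{sol22}). Part i) is complete and correct, and coincides with the published argument: the three pairs $(n,m),(n-1,m),(n,m-1)$, the identities $\tau_{n+m,n+m-1}=-\eta_{n+m,n+m-1}$ and $\eta_{r,r-1}=\xi_{r,r-1}^{(2)}=-\sum_{k=0}^{r-1}b_{k}$, and the telescoping give the claim. For part ii) your skeleton (six pairs, the decompositions $\tau_{n+m,n+m-2}=\zeta^{(1)}+\zeta^{(2)}$, $\eta_{r,r-2}=\xi^{(1)}+\xi^{(2)}$, the flattening identity for ordered double sums) is also the right one, and is in fact more careful than the paper's own printed computation, whose intermediate display silently drops the diagonal quadratic contributions $\zeta_{n+m,n+m-2}^{(2)}$, $\xi_{n,n-2}^{(2)}$, $\xi_{m,m-2}^{(2)}$ altogether. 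The $a^{2}$-part you assemble is correct.

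However, the one step you defer --- ``everything reorganizes into $-\frac12(\cdots)^2-\frac12(\cdots)$'' --- is precisely the step that fails: carried out, the algebra produces the \emph{opposite} sign. Write $S=\sum_{j=0}^{n+m-2}b_{j}$, $A=\sum_{j=0}^{n-1}b_{j}$, $B=\sum_{j=0}^{m-1}b_{j}$ and let $\Sigma_{S},\Sigma_{A},\Sigma_{B}$ be the corresponding sums of $b_{j}^{2}$. Your own substitutions give
\begin{equation*}
\zeta_{n+m,n+m-2}^{(2)}=\tfrac12\left(S^{2}+\Sigma_{S}\right),\qquad
\xi_{n,n-2}^{(2)}=\tfrac12\left(A^{2}-\Sigma_{A}\right),\qquad
\xi_{m,m-2}^{(2)}=\tfrac12\left(B^{2}-\Sigma_{B}\right),
\end{equation*}
while the cross term $(\eta_{n,n-1}+\eta_{m,m-1})\tau_{n+m-1,n+m-2}$ contributes $-(A+B)S$ and the product $\eta_{n,n-1}\eta_{m,m-1}$ contributes $AB$; the total $b$-part is therefore
\begin{equation*}
\tfrac12\,(S-A-B)^{2}+\tfrac12\,(\Sigma_{S}-\Sigma_{A}-\Sigma_{B}),
\end{equation*}
with \emph{plus} signs, not the minus signs of the statement that you promise to reach. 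A direct check at $n=m=2$ confirms this: the monic three-term recurrence yields $\tilde{p}_{2}^{2}=\tilde{p}_{4}+(b_{2}+b_{3}-b_{0}-b_{1})\tilde{p}_{3}+\left(a_{2}^{2}+a_{3}^{2}-a_{1}^{2}+b_{2}^{2}-(b_{0}+b_{1})b_{2}+b_{0}b_{1}\right)\tilde{p}_{2}+\cdots$, and indeed $b_{2}^{2}-(b_{0}+b_{1})b_{2}+b_{0}b_{1}=\tfrac12(b_{2}-b_{0}-b_{1})^{2}+\tfrac12(b_{2}^{2}-b_{0}^{2}-b_{1}^{2})$, the negative of what the printed corollary asserts. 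So the statement as printed carries a sign error (originating in the paper's flawed proof of ii)), and your plan, executed honestly, refutes rather than establishes it; asserting the minus-sign form without doing the bookkeeping is a genuine gap. Note also that your proposed sanity check $b_{k}\equiv 0$ kills every $b$-term and is blind to exactly this defect, so it cannot serve as evidence for the $b$-part. A final caveat you share with the paper: what is computed is the monic $\tilde{c}_{n,m,s}$, which differs from the orthonormal $c_{n,m,s}$ of the statement by the factor $\prod_{j=1}^{s}a_{j}/(\prod_{j=1}^{n}a_{j}\prod_{j=1}^{m}a_{j})$, so the identities hold verbatim only in the monic normalization.
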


\begin{proof}
i) By (\ref{lin}) we have $c_{n,m,n+m-1}\allowbreak =\allowbreak \eta
_{n,n}\eta _{m,m}\tau _{n+m,n\_m-1}\allowbreak +\allowbreak \eta
_{n,n-1}\eta _{m,m}\tau _{n+m-1,n+n-1}\allowbreak +\allowbreak \eta
_{n,n}\eta _{m,m-1}\tau _{n+m-1,n+n-1}\allowbreak =\allowbreak
\sum_{k=0}^{n+m-1}b_{k}\allowbreak \allowbreak -\allowbreak
\sum_{k=0}^{n-1}b_{k}\allowbreak -\allowbreak \sum_{k=0}^{m-1}b_{k}.$

ii) By (\ref{lin}) we have $c_{n,m,m+m-2}\allowbreak =\allowbreak \eta
_{n,n}\eta _{m,m}\tau _{n+m,n\_m-2}\allowbreak +\allowbreak \eta
_{n,n-2}\eta _{m,m}\tau _{n+m-2,n+m-2}+\allowbreak \eta _{n,n}\eta
_{m,m-2}\tau _{n+m-2,n+m-2}+\allowbreak \eta _{n,n-1}\eta _{m,m}\tau
_{n+m-1,n+m-2}\allowbreak +\allowbreak \eta _{n,n}\eta _{m,m-1}\tau
_{n+m-1,n+m-2}\allowbreak \allowbreak +\allowbreak \eta _{n,n-1}\eta
_{m,m-1}\tau _{n+m-2,m+n-2}\allowbreak \allowbreak =\allowbreak
\sum_{k=1}^{n+m-1}a_{k}^{2}\allowbreak -\allowbreak
\sum_{k=1}^{n-1}a_{k}^{2}-\sum_{k=1}^{m-1}a_{k}^{2}\allowbreak -\allowbreak
(\sum_{j=0}^{n-1}b_{j}\allowbreak +\allowbreak
\sum_{j=0}^{m-1}b_{j})\sum_{j=0}^{n+m-2}b_{j}\allowbreak +\allowbreak
\sum_{j=0}^{n-1}b_{j}\sum_{j=0}^{m-1}b_{j}.$ After little algebra we get the
desired form.
\end{proof}

\begin{remark}
As in the case of the connection coefficients the fact that linearization
coefficients are nonnegative is important. Why it is so, what are the
straightforward consequences of this fact and in what particular situation
it happens is again given in \cite{Szwarc92}. From the above mentioned
Corollary one can derive in fact necessary condition for linearization
coefficients to be nonnegative.
\end{remark}

\section{Proofs\label{dow}}

\begin{proof}[Proof of Proposition \protect\ref{interpretacja}]
i) Follows uniqueness of both Cholesky decomposition and orthonormal
polynomials provided sign of the leading coefficient is selected.

ii) We 
\begin{eqnarray*}
\int \mathbf{P}_{n}(x,\alpha )\mathbf{P}_{n}^{T}(x,\alpha )d\alpha
(x)\allowbreak &=&\allowbreak \mathbf{L}_{n}^{-1}\int \mathbf{X}_{n}\mathbf{X%
}_{n}^{T}d\alpha (x)\left( \mathbf{L}_{n}^{-1}\right) ^{T}\allowbreak \\
&=&\allowbreak \mathbf{L}_{n}^{-1}\mathbf{M}_{n}\left( \mathbf{L}%
_{n}^{-1}\right) ^{T}\allowbreak =\allowbreak \mathbf{I}_{n}.
\end{eqnarray*}%
Further we have 
\begin{equation*}
\mathbf{P}_{n}^{T}(x,\alpha )\mathbf{P}_{n}(y,\alpha )\allowbreak
\allowbreak =\allowbreak \mathbf{X}_{n}^{T}\allowbreak \left( \mathbf{L}%
_{n}^{-1}\right) ^{T}\mathbf{L}_{n}^{-1}\mathbf{Y}_{n}\allowbreak
=\allowbreak \left( \mathbf{X}_{n}\right) ^{T}(\mathbf{L}_{n}\mathbf{L}%
_{n}^{T})^{-1}\mathbf{Y}_{n}.
\end{equation*}%
Thus obviously we have 
\begin{equation*}
\left\vert \mathbf{X}_{n}\right\vert ^{2}/\xi _{n,n}\allowbreak \leq
\allowbreak \mathbf{X}_{n}^{T}\mathbf{M}_{n}^{-1}\mathbf{Y}_{n}\allowbreak
\leq \allowbreak \left\vert \mathbf{X}_{n}\right\vert ^{2}/\xi _{0,n},~~%
\text{and }\left\vert \mathbf{X}_{n}\right\vert ^{2}\allowbreak =\allowbreak
\sum_{i=0}^{n}x^{2i}.
\end{equation*}%
iii) 
\begin{eqnarray*}
\int \mathbf{P}_{n}^{T}(x)\mathbf{M}_{n}\mathbf{P}_{n}(x)d\alpha \allowbreak
&=&\allowbreak \int \func{tr}(\mathbf{M}_{n}\mathbf{P}_{n}(x)\mathbf{P}%
_{n}^{T}(x))d\alpha \\
&=&\allowbreak \func{tr}\mathbf{M}_{n}\mathbf{L}_{n}^{-1}\mathbf{M}%
_{n}\left( \mathbf{L}_{n}^{-1}\right) ^{T}\allowbreak =\allowbreak \func{tr}%
\mathbf{M}_{n}.
\end{eqnarray*}

iv) Denote $\mathbf{e}_{n}^{T}(t)\allowbreak =\allowbreak (1,e^{it},\ldots
,e^{int}).$ We have by Proposition \ref{interpretacja}, i) $\mathbf{P}%
_{n}(e^{it})\allowbreak =\allowbreak \mathbf{\Pi }_{n}\mathbf{e}%
_{n}^{T}(t)\allowbreak ,$ hence $\frac{1}{2\pi }\int_{0}^{2\pi }\mathbf{P}%
_{n}(e^{it})\mathbf{P}_{n}^{T}(e^{-it})dt\allowbreak \allowbreak
=\allowbreak \mathbf{\Pi }_{n}(\frac{1}{2\pi }\int_{0}^{2\pi }\mathbf{e}%
_{n}(t)\mathbf{e}_{n}^{T}(-t)dt)\mathbf{\Pi }_{n}^{T}.$ Secondly notice that 
$\left( k,j\right) -$th entry of the matrix $\mathbf{e}_{n}(t)\mathbf{e}%
_{n}^{T}(-t)$ is equal to $e^{it(k-j)}$ consequently $(\frac{1}{2\pi }%
\int_{0}^{2\pi }\mathbf{e}_{n}(t)\mathbf{e}_{n}^{T}(-t)dt)$ is equal to an
identity matrix. Second statement follows the fact that $\frac{1}{2\pi }%
\sum_{j=0}^{n}\int_{0}^{2\pi }\left\vert p_{j}(e^{it})\right\vert ^{2}dt$ is
the trace of $\frac{1}{2\pi }\int_{0}^{2\pi }\mathbf{P}_{n}(e^{it})\mathbf{P}%
_{n}^{T}(e^{-it})dt.$ But $\func{tr}(\mathbf{\Pi }_{n}\mathbf{\Pi }%
_{n}^{T})\allowbreak =\allowbreak \func{tr}(\mathbf{\Pi }_{n}^{T}\mathbf{\Pi 
}_{n})\allowbreak =\allowbreak \func{tr}\mathbf{M}_{n}^{-1}.$

v) By Proposition \ref{interpretacja}, ii) considered for $x\allowbreak
=\allowbreak y\allowbreak =\allowbreak 0.$ We get $\sum_{i=0}^{n}\left\vert
p_{i}(0)\right\vert ^{2}\allowbreak =\allowbreak \mathbf{0}_{n}^{T}\mathbf{M}%
_{n}^{-1}\mathbf{0}_{n}\mathbf{,}$ where $\mathbf{0}_{n}^{T}\allowbreak
=\allowbreak (1,0,\ldots ,0),$ which means that $\sum_{i=0}^{n}\left\vert
p_{i}(0)\right\vert ^{2}$ is $(0,0)$ entry of $\mathbf{M}_{n}^{-1}.$

vi) Let us denote $\bar{p}_{n}(x,\alpha )\allowbreak =\allowbreak \frac{1}{%
\sqrt{n+1}\log ^{2}(n+2)}\sum_{i=0}^{n}p_{i}(x,\alpha ).$ It satisfies
recursion 
\begin{equation*}
\bar{p}_{n+1}(x,\alpha )\allowbreak =\allowbreak \sqrt{\frac{n+1}{n+1}}\frac{%
\log ^{2}(n+2)}{\log ^{2}(n+3)}\bar{p}_{n}(x)+p_{n+1}(x)/\sqrt{n+2}\log
^{2}(n+3).
\end{equation*}%
Since we have $\sum_{n\geq 0}\frac{\log ^{2}(n+2)}{(n+2)\log ^{4}(n+2)}%
<\infty $ we deduce by Rademacher--Menshov theorem that series $\sum_{n\geq
0}\frac{p_{n}(x)}{\sqrt{n+1}\log ^{2}(n+2)}$ converges $\alpha -$a.s.
Further we apply \cite{Szab87} (Thm. 5).
\end{proof}

\begin{proof}[Proof of Proposition \protect\ref{3t-r}]
i) Trivial. ii) Multiplying both sides of (\ref{_f}) and (\ref{_s}) by $%
\prod_{i=1}^{n}a_{i}$ and dividing both sides of (\ref{lam1}) and (\ref{lam3}%
) by $\prod_{i=1}^{n}a_{i}$ we see that the quantities $\eta $ and $\tau $
satisfy system of equations (\ref{_1})-(\ref{_4}). iii) Follows the fact
that $j>i:$ $\sum_{k=i}^{j}\pi _{j,k}\lambda _{k,j}\allowbreak =\allowbreak
\sum_{k=i}^{j}\lambda _{j,k}\pi _{k,j}\allowbreak =\allowbreak 0$ and the
fact that $\lambda _{j,k}\pi _{k,i}\allowbreak =\allowbreak \tau _{j,k}\eta
_{k,i}$ and similarly for the product $\eta _{j,k}\tau _{k,i}.$
\end{proof}

\begin{proof}[Proof of Proposition \protect\ref{aux}]
First let us consider sequences with upper indices $\left( 1\right) .$ We
have $\xi _{n+1,0}^{(1)}\allowbreak =\allowbreak -a_{n}^{2}\xi
_{n-1,0}^{(1)} $ $.$ Recall that then $\xi _{0,0}^{(1)}\allowbreak
=\allowbreak 1$ and $\xi _{1,0}^{(1)}\allowbreak =\allowbreak 0.$ So we see
that $\xi _{n,0}^{(1)}$ with odd $n$ must be equal to zero.

To see $\xi _{n,n-2k+1}^{(1)}\allowbreak =\allowbreak 0,$ and $\zeta
_{n,n-2k-1}^{(1)}\allowbreak =\allowbreak 0,$ $k\allowbreak =\allowbreak
1,2,\ldots ,$ $n\geq 2k+1$ is easy since then our formulae (\ref{_22}) and (%
\ref{_44}) become now: 
\begin{eqnarray}
\xi _{n+1,n+1-(2k+1)}^{(1)} &=&-a_{n}^{2}\xi
_{n-1,n-1-(2k-1)}^{(1)}\allowbreak +\allowbreak \xi _{n,n-(2k+1)}^{(1)},
\label{pom} \\
\zeta _{n+1,n+1-(2k+1)}^{(1)} &=&\zeta
_{n,n-(2k+1)}^{(1)}+a_{n-1-(2k-1)}^{2}\zeta _{n-1,n-1-(2k-1).}^{(1)}
\label{pom1}
\end{eqnarray}%
We argue in case of (\ref{pom}) by induction assuming $\eta
_{n-1,n-2k}\allowbreak =\allowbreak 0$ and having $\eta _{2k+1,0}\allowbreak
=\allowbreak 0$ as shown above. In the case of (\ref{pom1}) firstly we
notice that from (\ref{_33}) with $n\allowbreak =\allowbreak 0$ we deduce
that $\zeta _{1,0}\allowbreak =\allowbreak 0$. Then taking in (\ref{pom1}) $%
n\allowbreak =\allowbreak 2$ and $k\allowbreak =\allowbreak 1$ we deduce
that $\zeta _{3,0}\allowbreak =\allowbreak 0.$ We use induction in the
similar way and deduce that $\zeta _{2k+1,0}\allowbreak =\allowbreak 0,$ $%
k\allowbreak =\allowbreak 0,\ldots $ . Now taking $k\allowbreak =\allowbreak
0$ we get 
\begin{equation*}
\zeta _{n+1,n}^{(1)}\allowbreak =\allowbreak \zeta
_{n,n-1}^{(1)}+a_{n-2}^{2}\zeta _{n-1,n}^{(1)}\allowbreak =\allowbreak \zeta
_{n,n-1}^{(1)},
\end{equation*}%
from which we deduce that $\zeta _{n,n-1}\allowbreak =\allowbreak 0$ for $%
n\geq 1.$ Now take $k\allowbreak =\allowbreak 1$ we get 
\begin{equation*}
\zeta _{n+1,n-2}^{(1)}\allowbreak =\allowbreak \zeta
_{n,n-3}^{(1)}+a_{n-2}^{2}\zeta _{n-1,n-2}^{(1)}\allowbreak =\allowbreak
\zeta _{n,n-3}^{(1)},
\end{equation*}%
from which we deduce that $\zeta _{n,n-3}\allowbreak =\allowbreak 0$ for all 
$n\geq 3.$ In the similar way we show that $\zeta _{n,n-2k-1}$ $\allowbreak
=\allowbreak 0$ for all $n\geq 2k+1.$

Hence let us consider the case of even differences in indices $i$ and $j$ in 
$\xi _{i,j}^{(1)}.$

The proofs will be by induction. Let us prove (\ref{sol11}) first. We will
prove it for indices $(n,n-2k).$ Recursive formula (\ref{_22}) becomes now:%
\begin{equation}
\xi _{n+1,n+1-2k}^{(1)}=-a_{n}^{2}\xi _{n-1,n-1-2(k-1)}^{(1)}+\xi
_{n,n-2k}^{(1)}  \label{eqn}
\end{equation}%
First notice that since sign of $\eta _{n,n-2k}$ is $(-1)^{k}$ and of $\eta
_{n-1,n-2k+1}$ is $(-1)^{k-1}$ by induction assumption we deduce that the
sign of $\eta _{n+1,n+1-2k}$ is $(-1)^{k}$ as claimed. Secondly notice that (%
\ref{sol11}) can be interpreted as a sum of products of elements of $k-$%
combinations drawn from the set $\left\{ a_{1}^{2},\ldots
,a_{n-1}^{2}\right\} $ such that distance between numbers of chosen elements
is greater than $1.$ For example from $3$ elements we can select only one
such $2-$combinations. Alter little reflection one sees that one there are $%
\binom{n-k}{k}$ such combinations consequently that $\eta _{n,n-2k}$
contains $\binom{n-k}{k}$ products. Equation (\ref{eqn}) states that sum of
such products of $k-$combinations chosen form the set $\{1,,\ldots ,n\}$ can
be decomposed on the sum of such products chosen from the set set with
indices $\left\{ 1,\ldots ,n-1\right\} $ and a sum of products containing
element $a_{n}^{2}$ times products of similarly chosen $(k-1)-$combinations
but from the set $\left\{ 1,\ldots ,n-2\right\} .$ There are $\binom{n-k}{k}$
summands of the first type and $\binom{n-1-(k-1)}{k-1}$ summand of the
second type (i.e. containing $a_{n}^{2}).$ The total number of summands in $%
\eta _{n+1,n+1-2k}$ is just%
\begin{equation*}
\binom{n-1-(k-1)}{k-1}+\binom{n-k}{k}=\binom{n+1-k}{k},
\end{equation*}%
by the well know property of the Pascal triangle as it should be.

The proof of (\ref{sol21}). Let us denote by $\beta _{n,l}$ the right hand
side of (\ref{sol2}). We have: 
\begin{eqnarray*}
\beta _{n+2k,n}-\beta _{n-1+2k,n-1}\allowbreak &=&\allowbreak
a_{n+1}^{2}\sum_{j_{2}=1}^{n+2}a_{j_{1}}^{2}\ldots \sum_{j_{k}=1}^{j_{k-1}+1}
\\
&=&a_{n+1}^{2}\beta _{n+1+2k-2,n+1}.
\end{eqnarray*}%
Further we have $\beta _{n+2,n}\allowbreak =\allowbreak
\sum_{j_{1}=1}^{n+1}a_{j_{1}}^{2}$ by direct calculation. Now notice that
sequences $\zeta ^{(1)}$ and $\beta $ satisfy the same difference equations
and have the same initial conditions. Hence they are identical.

Now let us consider sequences with upper index $(2).$ First of all notice
that by (\ref{sol22}) can be also written 
\begin{equation}
\zeta _{n+j,n}^{(2)}\allowbreak =\allowbreak
\sum_{k_{1}=0}^{n}b_{k_{1}}\sum_{k_{2}=0}^{k_{1}}b_{k_{2}}\ldots
\sum_{k_{j}=0}^{k_{j-1}}b_{k_{j}}.  \label{pp}
\end{equation}
Let us denote by $\gamma _{n+j,n}$ the left hand side of (\ref{pp}). From
this form we can easily deduce that 
\begin{equation*}
\gamma _{n+1+j,n+1}-\gamma _{n+j,n}\allowbreak =\allowbreak b_{n+1}\gamma
_{n+j,n+1}\allowbreak .
\end{equation*}%
Thus $\gamma _{n+j,n}$ satisfies the same recurrence as $\zeta
_{n+j,n}^{(2)} $ with the same initial condition. Consequently $\zeta
_{n+j,n}^{(2)}\allowbreak =\allowbreak \gamma _{n+j,n}.$

Now it remained to prove (\ref{sol12}) . First of all notice that left hand
side of (\ref{sol12}) is a sum of products of all of the sets $\left\{
b_{0},\ldots ,b_{n+j-1}\right\} $ of the size $j$. Let us denote it by $%
\delta _{n+j,n}.$ From what was stated earlier it follows that $\delta
_{n+1+j,n+1}-\delta _{n+j,n}$ is equal to the sum of product of subsets of
the set $\left\{ b_{0},\ldots ,b_{n+j-1}\right\} $ of the size $j$ that
contain element $b_{n+j-1}.$ Another word it is equal to $%
-(-1)^{j-1}b_{n+j-1}\delta _{n+j-1,n}.$ Hence $\delta _{n+j,n}$ and $\xi
_{n+j,n}^{(2)}$ satisfy the same recurrence with the same initial condition.
\end{proof}

\begin{proof}[Proof of Proposition \protect\ref{part}]
i) Combining (\ref{_1}), (\ref{_2}) with (\ref{_11}) and (\ref{_22}) we get $%
\hat{\eta}_{n+1,j}\allowbreak =\allowbreak \eta _{n+1,j}-\xi
_{n+1,j}^{(1)}-\xi _{n+1,j}^{(2)}\allowbreak =\allowbreak \eta
_{n,j-1}-b_{n}\eta _{n,j}-a_{n}^{2}\eta _{n-1,j}\allowbreak -\allowbreak
(\xi _{n,j-1}^{(1)}-a_{n}^{2}\xi _{n-1,j}^{(1)})\allowbreak -\allowbreak
(\xi _{n,j-1}^{(2)}-b_{n}\xi _{n,j}^{(2)})\allowbreak =\allowbreak \hat{\eta}%
_{n,j-1}\allowbreak -\allowbreak b_{n}(\eta _{n,j}-\xi _{n,j}^{(2)}-\xi
_{n,j}^{(1)})\allowbreak \allowbreak -\allowbreak \allowbreak b_{n}\xi
_{n,j}^{(1)}-a_{n}^{2}(\eta _{n-1,j}-\xi _{n-1,j}^{(1)}-\xi
_{n-1,j}^{(2)})\allowbreak -a_{n}^{2}\allowbreak \xi _{n-1,j}^{(2)}$ which
is first of the equations in i). Now let us consider (\ref{_3}), (\ref{_4}),
(\ref{_33}) and (\ref{_44}). We get $\allowbreak \hat{\tau}%
_{n+1,j}=\allowbreak \tau _{n+1,j}\allowbreak -\allowbreak \zeta
_{n+1,j}^{(1)}\allowbreak -\allowbreak \zeta _{n+1,j}^{(2)}\allowbreak
=\allowbreak \tau _{n,j-1}\allowbreak +\allowbreak b_{j}\tau
_{n,j}\allowbreak +\allowbreak a_{j+1}^{2}\tau _{n,j+1}\allowbreak
\allowbreak -\allowbreak (\allowbreak \zeta _{n,j-1}^{(1)}+a_{j+1}^{2}\zeta
_{n,j+1}^{(1)})\allowbreak -\allowbreak (\zeta _{n,j-1}^{(2)}+b_{j}\zeta
_{n,j}^{(2)})\allowbreak =\allowbreak \hat{\tau}_{n,j-1}\allowbreak
+\allowbreak a_{j+1}^{2}(\tau _{n,j+1}\allowbreak -\allowbreak \zeta
_{n,j+1}^{(1)}\allowbreak -\allowbreak \zeta _{n,j+1}^{(2)})\allowbreak
+\allowbreak a_{j+1}^{2}\zeta _{n,j+1}^{(2)}\allowbreak +\allowbreak
b_{j}(\tau _{n,j}\allowbreak -\allowbreak \zeta _{n,j}^{(2)}\allowbreak
-\allowbreak \zeta _{n,j}^{(1)})\allowbreak +\allowbreak b_{j}\zeta
_{n,j}^{(1)}.$

ii) Consider (\ref{aux1}) and (\ref{aux2}) with $k\allowbreak =\allowbreak
n. $ We get then $\hat{\eta}_{n+1,n}\allowbreak =\allowbreak \hat{\eta}%
_{n,n-1}$ and $\hat{\tau}_{n+1,n}\allowbreak =\allowbreak \hat{\tau}%
_{n,n-1}. $ Since for $n\allowbreak =\allowbreak 1$ we have $\hat{\eta}%
_{1,0}\allowbreak =\allowbreak \hat{\tau}_{1,0}\allowbreak =\allowbreak 0$
we get the assertion.

iii) Take $k\allowbreak =\allowbreak n-1$ in (\ref{aux1}) and (\ref{aux2}).
We get then $\hat{\eta}_{n+1,n-1}\allowbreak =\allowbreak \hat{\eta}%
_{n,n-2}\allowbreak -\allowbreak b_{n}\hat{\eta}_{n,n-1}\allowbreak
-\allowbreak a_{n}^{2}\hat{\eta}_{n-1,n-1}\allowbreak -\allowbreak b_{n}\xi
_{n,n-1}^{(1)}\allowbreak -\allowbreak a_{n}^{2}\xi
_{n-1,n-1}^{(2)}\allowbreak =\allowbreak \hat{\eta}_{n,n-2},$ since $\hat{%
\eta}_{n,n-1}\allowbreak =\allowbreak \xi _{n,n-1}^{(1)}\allowbreak
=\allowbreak 0$ and $\hat{\eta}_{n-1,n-1}\allowbreak =\allowbreak -\xi
_{n-1,n-1}^{(2)}\allowbreak =\allowbreak -1.$ Further since $\hat{\eta}%
_{2,0}\allowbreak =\allowbreak 0$ we get the assertion.

iv) As before we take $k\allowbreak =\allowbreak n-2$ in (\ref{aux1}) and (%
\ref{aux2}). We get then $\hat{\tau}_{n+1,n-2}\allowbreak =\allowbreak \hat{%
\tau}_{n,n-3}\allowbreak \allowbreak +\allowbreak b_{n-2}\hat{\tau}%
_{n,n-2}\allowbreak +\allowbreak +a_{n-1}^{2}\hat{\tau}_{n,n-1}\allowbreak
+\allowbreak a_{n-1}^{2}\zeta _{n,n-1}^{(2)}\allowbreak +\allowbreak
b_{n-2}\zeta _{n,n-2}^{(1)}\allowbreak =\allowbreak \hat{\tau}%
_{n,n-3}\allowbreak +\allowbreak \allowbreak a_{n-1}^{2}\zeta
_{n,n-1}^{(2)}\allowbreak +\allowbreak b_{n-2}\zeta _{n,n-2}^{(1)},$ since $%
\hat{\tau}_{n,n-2}\allowbreak =\allowbreak \hat{\tau}_{n,n-1}\allowbreak
=\allowbreak 0$ as shown above. Besides $\zeta _{n,n-1}^{(2)}\allowbreak
=\allowbreak \sum_{k=0}^{n-1}b_{k}$ and $\zeta _{n,n-2}^{(1)}\allowbreak
=\allowbreak \sum_{k=1}^{n-1}a_{k}^{2}$ as shown in (\ref{sol21}) and (\ref%
{sol22}). Now it a matter of algebra. We reason in the similar way in case
of $\eta _{n+3,n}$ using the fact that $\hat{\eta}_{n+2,n}\allowbreak
=\allowbreak \hat{\eta}_{n+1,n}\allowbreak =\allowbreak 0$ and knowing $\xi
_{n+2,n}^{(1)}$ and $\xi _{n+1,n}^{(2)}$ by (\ref{sol11}) and (\ref{sol12}).

So now let us consider $\eta _{n+4,n}.$ By taking $k\allowbreak =\allowbreak
n-3$ in (\ref{aux1}) we get: $\hat{\eta}_{n+1,n-3}\allowbreak =\allowbreak 
\hat{\eta}_{n,n-4}\allowbreak -\allowbreak b_{n}\hat{\eta}%
_{n,n-3}\allowbreak -\allowbreak a_{n}^{2}\hat{\eta}_{n-1,n-3}\allowbreak
-\allowbreak b_{n}\xi _{n,n-3}^{(1)}\allowbreak -\allowbreak a_{n}^{2}\xi
_{n-1,n-3}^{(2)}\allowbreak =\allowbreak \hat{\eta}_{n,n-4}\allowbreak
-\allowbreak b_{n}\hat{\eta}_{n,n-3}\allowbreak \allowbreak -\allowbreak
a_{n}^{2}\sum_{0\leq k_{1}<k_{2}\leq n-2}b_{k_{1}}b_{k_{2}}$ since $\hat{\eta%
}_{n-1,n-3}\allowbreak =\allowbreak \xi _{n,n-3}^{(1)}\allowbreak
=\allowbreak 0$ as shown above and by (\ref{sol11}). Further we use (\ref%
{sol12}) and some algebra.

v) To see that (\ref{sol2}) holds true it is enough to apply (\ref{aux1})
and (\ref{aux2}) with $b_{k}\allowbreak =\allowbreak 0,$ $k\geq 0$ which
results in $\xi _{n,k}^{(2)}\allowbreak =\allowbreak \zeta
_{n,k}^{(2)}\allowbreak =\allowbreak 0,$ for $n>0,$ $k\allowbreak \geq 0$
and which leads to relationships $\hat{\eta}_{n+1,k}=\hat{\eta}%
_{n,k-1}\allowbreak -\allowbreak a_{n}^{2}\hat{\eta}_{n-1,k}$ and $\hat{\tau}%
_{n+1,k}=\hat{\tau}_{n,k-1}\allowbreak +\allowbreak a_{k+1}^{2}\hat{\tau}%
_{n,k+1}$ with $\hat{\eta}_{n,n}\allowbreak =\allowbreak \hat{\tau}%
_{n,n}\allowbreak =\allowbreak 0,$ for $n>0$ and $\hat{\eta}%
_{i,0}\allowbreak =\allowbreak \hat{\tau}_{i,0}\allowbreak =\allowbreak 0$
for $i\allowbreak =\allowbreak 1,2.$ Now it is elementary to see that we
must have $\hat{\eta}_{n,k}\allowbreak =\allowbreak \hat{\tau}%
_{n,k}\allowbreak =\allowbreak 0$ for all $n>0,$ $k\geq 0.$
\end{proof}

\end{document}